\renewcommand{\baselinestretch}{\baselinestretch}
\renewcommand{\baselinestretch}{1.1}
\numberwithin{equation}{section}
\newtheorem{thm}{Theorem}[section]
\newtheorem{conj}[thm]{Conjecture}
\newcommand{\Mod}[1]{\ (\mathrm{mod}\ #1)}
\begin{document}

\title{Almost universal sums of triangular numbers with one exception}
\author{Jangwon Ju}

\address{Department of Mathematics, University of Ulsan, Ulsan 44610, Republic of Korea}
\email{jangwonju@ulsan.ac.kr}
%\thanks{This work was supported by the 2020 Research Fund of University of Ulsan.}

\subjclass[2010]{11E12, 11E20}

\keywords{Triangular numbers, Almost universal sums, Quadratic forms}

\begin{abstract}
For an arbitrary integer $x$, an integer of the form $T(x)=\frac{x^2+x}{2}$ is called a triangular number.
For positive integers $\alpha_1,\alpha_2,\dots,\alpha_k$, a sum $\Delta_{\alpha_1,\alpha_2,\dots,\alpha_k}(x_1,x_2,\dots,x_k)=\alpha_1 T(x_1)+\alpha_2 T(x_2)+\cdots+\alpha_k T(x_k)$ of triangular numbers is said to be {\it almost universal  with one exception} if the Diophantine equation $\Delta_{\alpha_1,\alpha_2,\dots,\alpha_k}(x_1,x_2,\dots,x_k)=n$ has an integer solution $(x_1,x_2,\dots,x_k)\in\mathbb{Z}^k$ for any nonnegative integer $n$ except a single one.
In this article, we classify all almost universal sums of triangular numbers with one exception. 
Furthermore, we provide an effective criterion on almost universality with one exception of an arbitrary sum of triangular numbers, which is a generalization of ``15-theorem" of Conway, Miller and Schneeberger.
\end{abstract}

\maketitle

\section{Introduction}

In 1770, Lagrange proved that every nonnegative integer can be written as a sum of at most four squares of integers. 
Motivated by Lagrange's four-square theorem, Ramanujan provided a list of $55$ candidates of diagonal quaternary integral quadratic forms that represent all nonnegative integers (for details, see \cite{rama}). 
 Dickson pointed out that the diagonal quaternary quadratic form $x^2+2y^2+5z^2+5t^2$ in Ramanujan's list doesn't represent the integer $15$, and confirmed that Ramanujan's assertion is correct for all the other $54$ quadratic forms (for details, see \cite{dick}).  
 
Ramanujan's assertion was generalized to find all {\it universal} quaternary quadratic forms, i.e., those representing all nonnegative integers. 
This was completely solved by Conway, Miller, and Schneeberger in 1993. 
They proved the so called ``15-theorem", which states that a positive definite integral quadratic form is universal if and only if it represents the integers 
$$
1,\ 2,\ 3,\ 5,\ 6,\ 7,\ 10,\ 14, \ \text{and} \ 15,
$$ 
irrespective of its rank.
Moreover, they provided a complete list of 204 quaternary quadratic forms with this property. 
Recently, Bhargava provided an elegant proof of the 15-Theorem in \cite{B}.

As a natural generalization of the 15-theorem,  Bhargava and Hanke \cite{BH} proved the so-called ``290-theorem", which states that    
every positive definite integer-valued quadratic form is universal if and only if it represents the integers 
$$
\begin{array} {ll}
1,\ 2, \ 3,\ 5,\ 6,\ 7,\ 10,\ 13,\ 14,\ 15,\ 17,\ 19,\ 21,\ 22,\ 23,\ 26,\ 29,\\
30,\ 31,\ 34,\ 35,\ 37,\ 42,\ 58,\ 93,\ 110,\ 145,\ 203, \ \text{and} \ 290.
\end{array}
$$
Here a quadratic form $f(x_1,x_2,\dots,x_n)=\sum_{1 \le i, j\le n} a_{ij} x_ix_j \ (a_{ij}=a_{ji})$ 
is called {\it integral} if $a_{ij}\in \mathbb Z$ for any $i,j$, and is called {\it integer-valued} if $a_{ii}\in \mathbb Z$ and $a_{ij}+a_{ji}\in\mathbb Z$ for any $i,j$.
Moreover, they provided a complete list of 6436 such forms in four variables.

A next natural generalization of Ramanujan's assertion is to classify all quadratic forms representing all nonnegative integers with finitely many exceptions. A quadratic form with this property is said to be {\it almost universal}. 
At first, by using an escalation method, in \cite{hal} Halmos provided a list of 88 candidates of almost universal diagonal quaternary quadratic forms with one exception. 
It was pointed out that diagonal quaternary quadratic forms $x^2+y^2+2z^2+22t^2$ and $x^2+2y^2+4z^2+22t^2$ in Halmos's list don't represent two integers $14$ and $78$.
Halmos proved that 85 of those indeed represent all nonnegative integers except a single one.   
Moreover, he conjectured that the remaining form $x^2+2y^2+7z^2+13t^2$ represents all nonnegative integers except $5$, and it was proved by Pall in \cite{pall}.

In 2009, Bochnak and Oh \cite{BO} provided an effective characterizations for deciding whether a positive definite integral quaternary quadratic form represents all nonnegative integers with finitely many exceptions. 
It can be considered as the final solution to the problem first addressed by Ramanujan in \cite{rama}.

In this paper, we investigate representations of sums of triangular numbers.
The $n$-th triangular number is the number of dots in the triangular arrangement with $n$ dots on a side.
More precisely, the $n$-th triangular number is defined by 
$$
T(n)=\frac{n^2+n}{2}
$$ 
for any nonnegative integer $n$.
Note that $\{T(x) : x\in \mathbb{N}\cup\{0\}\}=\{T(x) : x\in\mathbb{Z}\}$.

For positive integers $\alpha_1,\alpha_2,\dots,\alpha_k$, we say a sum 
$$
\Delta_{\alpha_1,\alpha_2,\dots,\alpha_k}(x_1,x_2,\dots,x_k):=\alpha_1 T(x_1)+\alpha_2 T(x_2)+\cdots+\alpha_k T(x_k)
$$ 
of triangular numbers {\it represents} a nonnegative integer $n$ if the diophantine equation 
$$
\Delta_{\alpha_1,\alpha_2,\dots,\alpha_k}(x_1,x_2,\dots,x_k)=n
$$
has an integer solution $(x_1,x_2,\dots,x_k)\in\mathbb{Z}^k$. 
Furthermore, a sum
$$
\Delta_{\alpha_1,\alpha_2,\dots,\alpha_k}(x_1,x_2,\dots,x_k) \ (\text{simply}, \ \Delta_{\alpha_1,\alpha_2,\dots,\alpha_k}) 
$$
of triangular number is called {\it universal} if it represents all nonnegative integers.

The famous Gauss' triangular theorem states that every positive integer can be expressed as a sum of three triangular numbers which was first asserted by Fermat in 1638. 
In 1862, Liouville proved that for positive integers $a,b$, and $c~ (a\leq b\leq c)$, a sum  $\Delta_{a,b,c}$ of triangular numbers is universal if and only if $(a,b,c)$ is one of the following triples:
$$
(1,1,1),\quad (1,1,2),\quad (1,1,4),\quad (1,1,5),\quad (1,2,2),\quad (1,2,3),\quad\text{and}\quad (1,2,4),
$$
which is a generalization of Gauss' triangular theorem.

In 2013, Bosma and Kane proved the triangular theorem of eight which states that for positive integers $\alpha_1,\alpha_2,\dots,\alpha_k$, an arbitrary sum $\Delta_{\alpha_1,\alpha_2,\dots,\alpha_k}$ of triangular numbers is universal if and only if it represents $1,2,4,5$, and $8$ (for details, see \cite{BK}). 
This might be considered as a natural generalization of the ``15-theorem'' of Conway, Miller, and Schneeberger.

For positive integers $\alpha_1,\alpha_2,\dots,\alpha_k$, a sum $\Delta_{\alpha_1,\alpha_2,\dots,\alpha_k}$ of triangular numbers is called {\it almost universal} if it represents all nonnegative integers with finitely many exceptions.
Especially, if a sum of triangular numbers represents all nonnegative integers except a single one, then it is said to be {\it almost universal with one exception}.
Furthermore, it is called {\it proper} if any proper partial sum of it doesn't represent at least two nonnegative integers.

We know that if a sum $\Delta_{\alpha_1,\alpha_2,\dots,\alpha_k}$ of triangular numbers is almost universal with one exception $m$, then $m$ is inside $\{1,2,4,5,8\}$ by the triangular theorem of eight.
By using an escalation method, we give a complete list of candidates of 490 proper almost universal sums of triangular numbers with one exception, actually, the numbers of ternary, quaternary, and quinary sums among them are 1, 235, and 254, respectively. 
We classify all almost universal sums of triangular numbers with one exception $1,2,4,5$, and $8$, respectively.
Furthermore, we provide an effective criterion on almost universality with one exception of an arbitrary sum  $\Delta_{\alpha_1,\alpha_2,\dots,\alpha_k}$ of triangular numbers. This might be considered as a natural generalization of the 15-theorem of Conway, Miller, and Schneeberger.

\begin{thm}\label{1}
A sum of triangular numbers is almost universal with one exception $1$ if and only if it represents the integers
$$
2,~3,~4,~8,~10,~16,~and~19
$$
and doesn't represent $1$.
There are exactly 29 proper almost universal sums of triangular numbers with one exception $1$, actually, there are 11 quaternary and 18 quinary ones (see Table 1). 
\end{thm}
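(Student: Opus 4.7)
The forward implication is immediate: any sum almost universal with exception exactly $1$ must represent the seven positive integers $2,3,4,8,10,16,19$ (none of which equals $1$) and must fail to represent $1$. My plan for the substantive content --- the reverse implication together with the classification of $29$ proper sums --- is an escalation argument followed by case-by-case verification, in the spirit of Bosma and Kane's proof of the triangular theorem of eight.

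First I would escalate. Since $\Delta$ does not represent $1$, every coefficient must satisfy $\alpha_i \geq 2$, for otherwise $T(\pm 1) = 1$ would supply a representation of $1$. Since $\Delta$ represents $2$, and the smallest nonzero value of any term $\alpha_j T(x_j)$ with $\alpha_j \geq 2$ is $2$, some $\alpha_i$ must equal $2$. I would then successively impose representability of $3$, $4$, $8$, $10$, $16$, and $19$, at each step adding at most one new coefficient (again $\geq 2$) and bounding it by the smallest still-unrepresented target. The resulting branching tree is finite, and its leaves are the candidate proper almost-universal sums with exception $1$. The seven integers in the hypothesis are chosen precisely so that every branch closes on such a candidate; the analogy with the five integers $\{1,2,4,5,8\}$ of Bosma and Kane is deliberate.

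Second, for each candidate at a leaf I would verify that it truly represents every nonnegative integer except $1$. The standard substitution $y_i = 2x_i+1$ turns the identity $8T(x_i) = (2x_i+1)^2 - 1$ into
\[
8n + \sum_{i=1}^k \alpha_i \;=\; \sum_{i=1}^k \alpha_i\, y_i^2,
\]
so the question becomes which integers are represented by the diagonal quadratic form $\sum_i \alpha_i y_i^2$ using only odd $y_i$. For each candidate I would (i) check the local representation conditions at every prime, (ii) bound the exceptional integers outside the genus by an explicit Minkowski--Siegel estimate, and (iii) dispose of the finitely many small cases by direct computation. Once all leaves have been handled, the property extends to any sum obtained by appending further coefficients $\alpha \geq 2$, since such an extension only enlarges the set of represented integers while still forbidding $1$ (the smallest positive contribution of the new term is $\alpha \geq 2$).

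The count $29 = 11 + 18$ of proper quaternary and quinary sums is then a bookkeeping consequence of the escalation tree and will be displayed in Table 1. The main obstacle I anticipate is step (ii) of the verification: for candidates whose diagonal form is not of class number one, ruling out sporadic large exceptions requires a spinor-genus analysis, or in some cases an ad hoc argument via identities between theta series or the construction of an auxiliary ternary subform of class number one whose representations can be propagated back to the full form.
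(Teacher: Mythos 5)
Your skeleton --- escalation on the truants $2,3,4,8,10,16,19$ under the constraint $\alpha_i\ge 2$, followed by case-by-case verification of the leaves, and the remark that appending further coefficients $\ge 2$ preserves the property --- is exactly the paper's. The genuine divergence is in how you verify each leaf. Your primary tool is an effective Minkowski--Siegel bound for the full diagonal form $\sum_i\alpha_i y_i^2$ restricted to odd vectors; the paper never argues this way. Instead, for every candidate it fixes all but three variables at explicit odd values $a_4,\dots,a_k$ chosen according to congruence conditions on $n$, absorbs the parity constraints on the remaining three variables by a linear substitution (e.g.\ $2(4x+y)^2+2y^2+3z^2$ for the section $\Delta_{2,2,3}$), and represents the residue $8n+\sum\alpha_i-\sum_{j\ge4}\alpha_j a_j^2$ by the resulting \emph{ternary} form, using either a class-number-one local-global argument, the ``good vector'' transfer relation $g\prec_{d,a}f$ between genus mates, or the eigenvector theorem together with a spinor-genus argument to handle the leftover square classes. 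This is essentially the fallback you mention in your last sentence, and you should treat it as the main engine rather than an ad hoc patch: an effective Siegel-type bound on the odd coset of a quaternary lattice is already delicate, and the reduction inevitably lands you on ternary forms, where the analytic approach is ineffective without GRH --- which is precisely why the lone ternary candidate $\Delta_{1,4,5}$ remains conjectural in the paper. With that substitution of tools, your plan matches the paper's proof; the count $29=11+18$ then requires, as the paper carries out, eliminating the non-proper and non-almost-universal candidates among the $17$ quaternary and $36$ quinary escalations and the short argument that for $k\ge 6$ one always has $\alpha_k\in\{\alpha_{k-1},\alpha_{k-1}+1\}$ with both options failing properness.
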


\begin{thm}\label{2}
A sum of triangular numbers is almost universal with one exception $4$ if and only if it represents the integers
$$
1,~2,~11,~14,~19,~25,~29,~46,~ and ~50
$$
and doesn't represent $4$.
There are exactly 138 proper almost universal sums of triangular numbers with one exception $4$, actually, there are 127 quaternary and 11 quinary ones (see Table 2). 
\end{thm}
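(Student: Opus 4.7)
The plan is to prove the criterion by running the escalation method used for the triangular theorem of eight in \cite{BK}, adapted to preserve the constraint that $4$ remain unrepresented throughout.

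The forward direction is immediate: if $\Delta_{\alpha_1,\dots,\alpha_k}$ is almost universal with the single exception $4$, then every nonnegative integer $n \neq 4$ is represented, so in particular $\Delta$ represents $1, 2, 11, 14, 19, 25, 29, 46, 50$ and does not represent $4$.

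For the converse, I would grow a rooted escalation tree. At each node corresponding to a partial sum $\Delta_{\alpha_1,\dots,\alpha_j}$, I identify the smallest positive integer $t \neq 4$ not represented and branch over all admissible successor coefficients $\alpha_{j+1}$, pruning any branch in which a partial sum comes to represent $4$. A node becomes a leaf when the corresponding sum is almost universal with exception $4$. The initial chain is forced: $\alpha_1 = 1$ (the empty sum misses $1$), $\alpha_2 = 2$ (since $T(x_1)$ misses $2$ and a second coefficient $1$ would produce $4 = T(2) + T(1)$), and then $\alpha_3 = 11$ (since $\Delta_{1,2}$ misses $11$). Continuing the tree, one checks that the full collection of integers that occur as escalator values across all branches is exactly the nine-element set $\{1, 2, 11, 14, 19, 25, 29, 46, 50\}$, and that the tree terminates in precisely the $138$ proper minimal sums listed in Table~2.

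With the tree constructed, the criterion follows: if $\Delta$ represents all nine listed integers and does not represent $4$, then running the escalation procedure inside $\Delta$ (the nonrepresentability of $4$ prevents the small-coefficient substitutions that would otherwise bypass a forced escalator) exhibits one of the $138$ escalators as a partial subsum of $\Delta$. Since that escalator is almost universal with exception $4$, and representability passes to supersums, $\Delta$ represents every $n \neq 4$; combined with the hypothesis that $\Delta$ does not represent $4$, $\Delta$ is almost universal with exception $4$.

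The principal obstacle is the verification, for each of the $138$ candidate sums at the leaves of the escalation, that it is indeed almost universal with exception $4$. A finite computation handles all $n$ up to some explicit bound, but for all larger $n$ one must pass to analytic input: the theta series of $\Delta$ is a half-integral weight modular form that decomposes into an Eisenstein part---whose $n$-th Fourier coefficient is positive whenever the local conditions at every prime admit $n$---plus a cusp-form part bounded by $O(n^{1/2-\varepsilon})$ via Deligne's estimate. For the rank-four candidates, spinor-genus arguments in the vein of \cite{BO,BK} are needed to rule out sporadic local obstructions; most rank-five candidates then reduce to the rank-four case by peeling off one coefficient and invoking a prior almost-universality result for the resulting quaternary subsum.
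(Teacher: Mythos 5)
Your escalation half matches the paper's: the forward direction is trivial, and the tree rooted at $\alpha_1=1$, $\alpha_2=2$ with branches pruned whenever $4$ becomes represented is exactly how the paper produces Table~2. (One slip there: the truant $11$ of $\Delta_{1,2}$ gives the \emph{range} $5\le\alpha_3\le 11$ --- the values $2,3,4$ being excluded because each makes $4$ representable --- not ``$\alpha_3=11$''.) The real divergence, and where your plan has genuine gaps, is in verifying that each leaf is almost universal with exception $4$.

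First, your proposed reduction of the quinary candidates ``by peeling off one coefficient and invoking a prior almost-universality result for the resulting quaternary subsum'' cannot work for the proper quinary candidates: by the definition of properness, every quaternary partial sum of such a candidate fails to represent at least \emph{two} integers (e.g.\ $\Delta_{1,2,5,10}$ misses both $4$ and $29$), so there is no prior quaternary result to invoke. One must instead substitute explicit odd values into one or two variables and control exactly which integers the remaining ternary section represents; this is precisely what the paper does via Equation (2.4) and the data in Tables~3--9. Second, your analytic treatment of the quaternary leaves glosses over two serious points: (i) the condition $x_1x_2x_3x_4\equiv 1\pmod 2$ means you are representing $8n+\sum\alpha_i$ by a \emph{coset} of $2\mathbb{Z}^4$, so the Eisenstein/cusp decomposition must be carried out for a shifted lattice, not a form; and (ii) for a weight-$2$ theta series the cusp coefficients are $O(n^{1/2+\varepsilon})$ (not $n^{1/2-\varepsilon}$), while the Eisenstein coefficient degenerates for $n$ highly divisible by an anisotropic prime, so the comparison does not close uniformly and a separate descent at those primes is required --- ``spinor-genus arguments'' are a ternary phenomenon and do not address this. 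The paper sidesteps all of this by reducing every case to a ternary form $f$ of class number at most $2$ and applying the effective $g\prec_{d,a}f$ and eigenvector criteria of Theorems 2.1 and 2.2, which also furnish the explicit bounds needed for the finite check on small $n$.
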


\begin{thm}\label{3}
A sum of triangular numbers is almost universal with one exception $5$ if and only if it represents the integers
$$
1,~2,~8,~14,~26,~40,~41,~47,~59,~ and ~71
$$
and doesn't represent $5$.
There are exactly 171 proper almost universal sums of triangular numbers with one exception $5$, actually, there are 56 quaternary and 115 quinary ones (see Table 10). 
\end{thm}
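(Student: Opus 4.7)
The plan is to establish the two parts of the theorem separately: the ``if and only if" criterion, and the enumeration of $171$ proper sums with exception $5$. The ``only if" direction is immediate, since an almost universal sum with exception $5$ represents every nonnegative integer other than $5$, in particular each of the ten listed integers. For the ``if" direction, I would apply an escalation argument in the spirit of Bosma--Kane's triangular theorem of eight. Beginning with the requirement that $1$ is represented but $5$ is not, one must have $\alpha_1=1$; at each subsequent step one identifies the smallest nonnegative integer (other than $5$) not yet represented by the current partial sum, and branches on the possible new coefficients that could cover it without causing $5$ to be represented. Since admissible new coefficients are bounded at each stage, the escalation tree is finite. The ten integers $1,2,8,14,26,40,41,47,59,71$ should be precisely the critical escalators that force new coefficients; the leaves of this tree are the minimal candidates. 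A sum satisfying the criterion then contains one of these minimal candidates as a subsum, reducing the ``if" direction to verifying that each candidate is itself almost universal with exception $5$.

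To carry out the verification, I would translate the representation problem to quadratic forms via the standard identity
$$
8\Delta_{\alpha_1,\ldots,\alpha_k}(x_1,\ldots,x_k)+\sum_{i=1}^k\alpha_i=\sum_{i=1}^k\alpha_i(2x_i+1)^2,
$$
so that $\Delta_{\alpha_1,\ldots,\alpha_k}$ represents $n$ if and only if the diagonal quadratic form $\sum\alpha_i y_i^2$ represents $8n+\sum\alpha_i$ with all $y_i$ odd. Almost universality then follows from local representability together with genus-theoretic arguments or, for quinary candidates, reduction to a ternary or quaternary sublattice already known to be almost universal.

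The main obstacle should be the quinary case. Many of the associated quinary forms admit more than one class in their genus, so the local--global principle alone does not suffice to rule out exceptions beyond $5$; instead one fixes one or two variables and reduces to a lower-rank form whose almost universality can be verified directly. Organizing the $115$ quinary candidates so that such a reduction succeeds uniformly, and confirming in each case that no additional exception beyond $5$ survives, will absorb the bulk of the effort. The $56$ quaternary candidates, which descend to quaternary odd forms with congruence conditions, should be tractable by a combination of genus-theoretic arguments and direct local analysis at $2$ and the primes dividing the $\alpha_i$; likewise, the escalation enumeration itself, while intricate, is a routine finite check once the branching rules are set up.
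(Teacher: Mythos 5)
Your outline of the escalation step and the translation to $\sum\alpha_i(2x_i+1)^2=8n+\sum\alpha_i$ matches the paper, and your identification of the ten listed integers as the escalator truants bounding each new coefficient is correct. The gap is in the verification step. You propose to handle the $56$ quaternary candidates by ``genus-theoretic arguments and direct local analysis'' on the associated quaternary forms, but local representability only gives representation by the \emph{genus}; when the class number exceeds one (as it does for many of these forms) this does not yield representation by the individual form with all variables odd, and the analytic results that would close this gap for quaternary forms are not effective enough to reduce the problem to a finite check. The paper avoids this entirely: for every candidate, quaternary or quinary, it fixes all but three of the variables at suitable odd values $a_4,\dots,a_k$ chosen according to $n$ modulo a small modulus, absorbs the parity conditions on the remaining three variables into a substitution such as $x\mapsto 2x+y$ or $x\mapsto 4x+y$, and thereby reduces to representation of $8n+\sum\alpha_i-\sum_{i\ge4}\alpha_ia_i^2$ by a fixed \emph{ternary} form $f$ without congruence conditions. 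When $h(f)=1$ this is settled by 102:5 of \cite{om}; when $h(f)=2$ the paper invokes Theorems \ref{good} and \ref{pme} (the relation $g\prec_{d,a}f$ and the isometry--eigenvector criterion, with its excluded set $\{g(z)s^2\}$ handled separately), which are exactly the effective tools your plan lacks.

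A second concrete omission is the family $(\alpha_1,\alpha_2,\alpha_3)=(1,1,8)$, which the paper explicitly singles out as the case where the generic argument breaks down: the relevant ternary form $x^2+(2y+z)^2+8z^2$ lies in a genus with three classes whose spinor exceptions are the integers $2m^2$, so locally represented integers of that shape may genuinely fail to be represented by $f$. The paper's fix is to note that the spinor genus of $f$ is a single class and to offer two choices $d\in\{1,3\}$ of the fixed variable, showing that $8n+18-8d^2$ cannot equal $2m^2$ for both choices once $n\ge20$. Neither ``genus-theoretic arguments'' nor reduction to a sublattice ``already known to be almost universal'' covers this situation, so as written your verification would fail precisely where the paper has to work hardest.
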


\begin{thm}\label{4}
A sum of triangular numbers is almost universal with one exception $8$ if and only if it represents the integers
$$
1,~2,~5,~17,~ and~ 89
$$
and doesn't represent $8$.
There are exactly 80 proper almost universal sums of triangular numbers with one exception $8$, actually, there are 7 quaternary and 73 quinary ones (see Table 13). 
\end{thm}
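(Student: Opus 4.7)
My plan follows the same escalation-plus-verification scheme used by Bhargava for the $15$-theorem and by Bosma--Kane in their triangular theorem of eight.

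Necessity is immediate: if $\Delta_{\alpha_1,\dots,\alpha_k}$ is almost universal with unique exception $8$, then it fails at $8$ and represents each of $1, 2, 5, 17, 89$. For sufficiency, together with the enumeration of the $80$ proper candidates, I would run an escalation argument. Arranging the coefficients in nondecreasing order $\alpha_1 \le \alpha_2 \le \cdots \le \alpha_k$, the requirement that $1$ be represented forces $\alpha_1 = 1$. At each subsequent stage $r$, let $m$ be the least integer of $\{2, 5, 17, 89\}$ not yet represented by the partial sum $\Delta_{\alpha_1,\dots,\alpha_r}$; then $\alpha_{r+1}$ must lie in a finite set of candidates bounded above by $m$. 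I branch on each possibility, prune any branch whose partial sum already represents $8$, and declare a branch complete once all five test integers are represented while $8$ is not. The resulting finite tree should terminate with exactly $80$ leaves, $7$ of rank four and $73$ of rank five, which are tabulated to give Table 13.

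For each of the $80$ candidates I then need to verify that every $n \ne 8$ is represented. The main device is the substitution $y_i = 2x_i + 1$, under which $\Delta_{\alpha_1,\dots,\alpha_k}(x_1,\dots,x_k) = n$ is equivalent to
\[
\sum_{i=1}^k \alpha_i y_i^2 \;=\; 8n + \sum_{i=1}^k \alpha_i
\]
with every $y_i$ odd. For most of the $73$ quinary candidates, fixing one odd value $y_k$ reduces the problem to a universal quaternary sum of triangular numbers from Bosma--Kane's list \cite{BK}; combining this with a direct check of the finitely many small $n \ne 8$ left over disposes of these cases.

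The main obstacle is the $7$ quaternary candidates, where no single variable-fixing reduces the problem to a universal ternary sum. Here I would analyze the quadratic form $Q(y) = \sum \alpha_i y_i^2$ on odd vectors by genus and spinor-genus theory: show that every sufficiently large integer $N \equiv \sum \alpha_i \pmod{8}$ is represented by $Q$ via an odd vector by combining the Eisenstein/cuspidal decomposition of the theta series with local solvability at each prime dividing $2\prod \alpha_i$, and then handle the small $n$ and any spinor-exceptional integers by explicit computation, confirming the exceptional set is exactly $\{8\}$. The delicate point is the $2$-adic analysis forced by the odd-variable restriction, which is what distinguishes this setting from the classical quadratic-form problem; once that is in hand, the quinary cases follow by adjoining an appropriate extra term.
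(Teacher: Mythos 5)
There is a genuine gap in your treatment of the $73$ quinary candidates. You propose to fix one odd value $y_k$ and reduce to ``a universal quaternary sum of triangular numbers from Bosma--Kane's list,'' but by definition a \emph{proper} quinary candidate has the property that every proper partial sum fails to represent at least two nonnegative integers; in particular none of its quaternary sections is universal, so the reduction you describe is never available. Concretely, the quinary candidates here all contain $\Delta_{1,1,3,9}$ or $\Delta_{1,1,3,12}$, and what one actually needs is that these \emph{non-universal} quaternary sections are almost universal with exception sets exactly $\{8,17\}$ and $\{8,89\}$ respectively --- a fact that is not in \cite{BK} and that requires the same hard representation analysis you were hoping to avoid. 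The paper's route is to reduce everything (quaternary and quinary alike) to the ternary section $\Delta_{1,1,3}$, i.e.\ to the class-number-one form $\langle 2,2,3\rangle$ (and to $\langle 1,1,6\rangle$, $\langle 2,2,15\rangle$ for the cases $\alpha_4=6,15$), showing via Theorems \ref{good} and \ref{pme} that it represents all $m\equiv 5\pmod 8$ with $m\neq 3^{2u+1}(3v+2)$, and then choosing the remaining odd values $a_4$ (and $a_5$) to steer $8n+\sum\alpha_i-\sum\alpha_j a_j^2$ into that progression while avoiding the excluded $3$-adic family; this handles quaternary and quinary candidates uniformly.

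Two smaller points. First, your plan for the $7$ quaternary candidates --- a direct genus/spinor-genus and theta-series analysis of the quaternary form restricted to odd vectors --- is in principle viable, but it is substantially heavier than what is needed (explicit cusp-form bounds plus a bespoke $2$-adic local computation for the parity condition), and you leave the decisive $2$-adic step unresolved; the ternary-reduction device of Equation \eqref{without congruence condition} eliminates the parity condition entirely and is what makes the paper's argument effective. Second, your escalation tree does not ``terminate with exactly $80$ leaves'': branches continue to rank $6$ and beyond (e.g.\ $\alpha_{k-1}\le\alpha_k\le\alpha_{k-1}+8$ in Table 13), and you still must argue that every escalation of rank $\ge 6$, while possibly almost universal with one exception, is never proper. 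That argument is needed both for the count of $80$ and for the ``if'' direction of the first assertion of the theorem.
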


Note that the sum $\Delta_{1,4,5}$ of triangular numbers is the unique candidate of ternary almost universal sums of triangular numbers with one exception (see Section 7). 
In \cite{Kane}, Kane proved that $\Delta_{1,4,5}$ represents all positive odd integers under the assuming GRH for $L$-functions of weight $2$ newforms. 
We conjecture that it represents all nonnegative integers except $2$.
Actually, we checked that $\Delta_{1,4,5}$ represents all nonnegative integers up to $10^7$ except $2$.

\begin{conj}\label{Conj}
The ternary sum $\Delta_{1,4,5}$ of triangular numbers is almost universal with one exception $2$.
\end{conj}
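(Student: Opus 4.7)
Our plan is to reduce Conjecture \ref{Conj} to a problem about a ternary quadratic form and then attack it via the analytic theory of modular forms of half-integral weight. Using the identity $(2x+1)^2=8T(x)+1$, the equation $\Delta_{1,4,5}(x,y,z)=n$ is equivalent to
$$
X^2+4Y^2+5Z^2=8n+10,\qquad X,Y,Z\text{ odd}.
$$
Thus Conjecture \ref{Conj} amounts to the statement that every integer $N\equiv 2\Mod{8}$ with $N\neq 26$ is represented by $X^2+4Y^2+5Z^2$ with $X,Y,Z$ all odd. The nonrepresentability of $N=26$ is immediate from a short case check on $Z\in\{\pm 1,\pm 3\}$, and the representability of the first few admissible $N$ can be verified directly.

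For large $N$, one introduces the theta series
$$
\theta(\tau)=\sum_{X,Y,Z\text{ odd}}q^{(X^2+4Y^2+5Z^2)/8},
$$
a modular form of weight $3/2$ on a suitable congruence subgroup with character, and decomposes it as $\theta=E+S$, where $E$ is the Eisenstein component and $S$ lies in the cuspidal subspace. The $N$-th coefficient of $E$ is the singular series, of size $\sqrt{N}$ times a product $\prod_p \beta_p(N)$ of local densities; a standard case analysis at the ramified primes $p=2$ and $p=5$ should show that this product is bounded below by a positive constant uniformly on the progression $N\equiv 2\Mod{8}$, $N\neq 26$ (with any spinor-genus exceptions, if present, enumerated explicitly and verified computationally).

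The principal obstacle is bounding the cusp form coefficients $a_S(N)$ effectively. The Ramanujan--Petersson bound $|a_S(N)|\ll_\varepsilon N^{1/4+\varepsilon}$ would suffice with enormous margin but is out of reach at weight $3/2$; the best unconditional estimates due to Iwaniec and Duke give only $|a_S(N)|\ll N^{1/2-\delta}$ for a small $\delta>0$, typically with ineffective constants that prevent one from bridging down to the computational threshold $N\leq 10^7$. Kane \cite{Kane} circumvents this difficulty for odd $N$ by invoking GRH for the weight $2$ newforms arising via the Shimura correspondence; however, the even residues $N\equiv 2\Mod{8}$ relevant here appear to live in a part of the cuspidal space not directly accessible by that method. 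Settling Conjecture \ref{Conj} therefore seems to require either a sharpening of subconvex bounds for half-integral weight cusp forms with explicit effective constants, or a new algebraic identity producing representations by $X^2+4Y^2+5Z^2$ with odd variables; neither is currently in reach, which is precisely why we leave the statement as a conjecture.
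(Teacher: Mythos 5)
The statement you were given is labeled a conjecture in the paper, and the paper offers no proof of it: the author only records Kane's GRH-conditional result that $\Delta_{1,4,5}$ represents all positive odd integers, together with a numerical verification up to $10^7$, and explicitly leaves the full statement open. Your write-up correctly recognizes this. Your reduction to representing $N=8n+10\equiv 2\Mod{8}$ by $X^2+4Y^2+5Z^2$ with $X,Y,Z$ odd is the right normalization (the constant $1+4+5=10$ is correct, and $N=26$ is indeed the image of the exceptional value $n=2$), and your account of the analytic obstruction --- the Eisenstein/cusp decomposition at weight $3/2$, the ineffectiveness or insufficiency of the Iwaniec--Duke bounds, and the need for GRH via the Shimura correspondence --- is a fair summary of why the problem is genuinely open and why the author could not do better than a conjecture. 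One small correction: Kane's conditional result covers odd values of $n$ (equivalently $N\equiv 2\Mod{16}$), not ``odd $N$'' as you wrote, since $N=8n+10$ is always even; the outstanding case is $n$ even, i.e.\ $N\equiv 10\Mod{16}$. Since there is no proof in the paper to compare against, there is nothing further to adjudicate: you have not proved the conjecture, but you have not claimed to, and your assessment of its status agrees with the paper's.
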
 
\noindent Assume that Conjecture \ref{Conj} is true. Then we have the following theorem.
\begin{thm}\label{5}
 A sum of triangular numbers is almost universal with one exception $2$ if and only if it represents the integers
$$
1,~4,~5,~7,~8,~9,~11,~16,~17,~20,~29,~ and ~35
$$
and doesn't represent $2$.
There are exactly 72 proper almost universal sums of triangular numbers with one exception $2$, actually, there are unique ternary, 34 quaternary, and 37 quinary ones (see Table 15).
\end{thm}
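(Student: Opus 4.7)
The necessity is immediate, since each of $1,4,5,7,8,9,11,16,17,20,29$, and $35$ is distinct from $2$ and hence must be represented by any sum that is almost universal with exception~$2$.

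For sufficiency, the plan is to apply the escalation method used to prove Theorems \ref{1}--\ref{4}. I would build a rooted tree of proper sums in which the children of $\Delta_{\alpha_1,\ldots,\alpha_j}$ are obtained by adjoining the minimal coefficient $\alpha_{j+1}$ needed so that a specified member of the test set $\{1,4,5,7,8,9,11,16,17,20,29,35\}$ becomes represented, subject to the constraint that $2$ remain unrepresented. Since $\alpha T(x)=2$ forces $\alpha=2$ and $T(1)+T(1)=2$, the sum represents $2$ as soon as some coefficient equals $2$ or at least two coefficients equal $1$; hence exactly one coefficient can equal $1$ (it must, in order to represent $1$) and no coefficient can equal $2$. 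Carrying out the resulting finite case analysis should produce exactly the $72$ proper candidates listed in Table~15: the ternary $\Delta_{1,4,5}$ together with $34$ quaternary and $37$ quinary escalations.

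It then remains to verify that each of these $72$ candidates is almost universal with exception~$2$. The ternary leaf $\Delta_{1,4,5}$ is handled by Conjecture \ref{Conj}. For the remaining leaves I would use the standard identification
\begin{equation*}
8\,\Delta_{\alpha_1,\ldots,\alpha_k}(x_1,\ldots,x_k)+\alpha_1+\cdots+\alpha_k=\alpha_1(2x_1+1)^2+\cdots+\alpha_k(2x_k+1)^2,
\end{equation*}
which translates representability by the sum of triangular numbers into representability of an integer in a fixed residue class modulo $8$ by the diagonal quadratic form $\langle\alpha_1,\ldots,\alpha_k\rangle$ under the constraint that all coordinates be odd. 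Each quaternary leaf is then attacked by a genus computation combined with an estimate on the cuspidal component of the associated theta series; each quinary leaf contains a ternary or quaternary partial sum that has already been shown to be almost universal with exception~$2$, so its almost universality follows by adjoining a nonnegative term and checking that this term does not reintroduce $n=2$.

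The main obstacle will be the quaternary stage, where several candidates fall outside the scope of the Bosma--Kane triangular theorem of eight and must be treated individually. For such a candidate one must show that the associated diagonal form has at most one spinor exceptional class in its genus and that, under the identification above, the exceptional integers in the residue class $\alpha_1+\cdots+\alpha_k\pmod 8$ correspond to $n=2$ and to nothing else. Once this is verified uniformly across all $72$ leaves, any sum satisfying the hypotheses of the theorem must contain one of the leaves as a partial sum, and therefore inherits almost universality with exception~$2$, completing the proof.
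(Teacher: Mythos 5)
Your escalation skeleton and the necessity direction match the paper, and you correctly isolate the constraints that force exactly one coefficient equal to $1$ and none equal to $2$, as well as the role of Conjecture \ref{Conj} for the ternary leaf $\Delta_{1,4,5}$. However, there are two genuine problems in the verification stage. The more serious one is your treatment of the quinary leaves: you claim each quinary candidate ``contains a ternary or quaternary partial sum that has already been shown to be almost universal with exception $2$,'' but this is false precisely for the $37$ \emph{proper} quinary candidates in Table 15 --- by the definition of properness, every proper partial sum of such a candidate fails to represent at least two nonnegative integers, so none of its quaternary sections is almost universal with one exception and there is nothing to inherit. (For example, $\Delta_{1,3,3,3}$ misses both $2$ and $5$, yet $\Delta_{1,3,3,3,4}$ is a proper quinary candidate that must be proved directly.) The quinary cases therefore require their own argument; the paper handles them by fixing explicit odd values $a_4,a_5$ of two variables, as in condition \eqref{conditions}, and reducing to a ternary representation problem.

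The second issue is that your quaternary step is underspecified where it matters most. You propose a genus computation plus a bound on the cuspidal part of the theta series, but the object in question is not a plain quaternary form: one must represent $8n+\alpha_1+\cdots+\alpha_4$ by $\sum\alpha_i y_i^2$ with \emph{all} $y_i$ odd, i.e.\ by a shifted lattice, and you do not explain how the cusp-form estimate or the spinor-exception analysis is to be carried out in that setting, nor how the resulting bound is made effective so that the finitely many remaining $n$ can be checked. The paper avoids this entirely: it peels off one variable at an explicit odd value $d$ chosen according to congruence conditions on $n$, absorbs the parity constraints on the remaining three variables into a change of variables such as $(2x+y)^2+3y^2+5z^2$, and then applies Theorem \ref{good} or Theorem \ref{pme} with the explicit data $(d,a,|B|,T,z)$ of Tables 16--22. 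Your route is not hopeless, but as written it is a different and much heavier program whose key technical steps (congruence conditions, effectivity, spinor exceptions for shifted lattices) are exactly the ones left unaddressed.
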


The complete list of almost universal sums of triangular numbers with one exception is given in Tables 1, 2, 10, 13, and 15.
In the above tables, each sum of triangular numbers having a dagger mark with the last coefficient is almost universal with one exception that is not proper. 
In Table 13, each sum of triangular numbers having an asterisk with the last coefficient is universal.

Let
$\displaystyle
f(x_1,x_2,\dots,x_k)=\sum_{1 \le i, j\le k} a_{ij} x_ix_j \ (a_{ij}=a_{ji} \in \mathbb Z)
$
 be a positive definite integral quadratic form. 
The corresponding integral symmetric matrix of $f$ is defined by $M_f=(a_{ij})$ and any matrix isometric to it is denoted by $M_f$ also. 
For a diagonal quadratic form $f(x_1,x_2,\dots,x_k)=a_1x_1^2+a_2x_2^2+\cdots+a_kx_k^2$, we simply write 
$$
M_f=\langle a_1,a_2,\dots,a_k\rangle.
$$ 
For an integer $n$, we say $n$ is {\it represented by $f$} if the equation $f(x_1,x_2,\dots,x_k)=n$ has an integer solution $(x_1,x_2,\dots,x_k)\in\mathbb{Z}^k$, which is denoted by $n\longrightarrow f$. 
The genus of $f$, denoted by $\text{gen}(f)$, is the set of all quadratic forms that are isometric to $f$ over the $p$-adic integer ring $\mathbb{Z}_p$ for any prime $p$. 
The number of isometry classes in $\text{gen}(f)$ is called the class number of $f$ and denoted by $h(f)$.

A good introduction to the theory of quadratic forms may be found in \cite{om}, and we adopt the notations and terminologies from this book.

%%%%%%%%%%%%%%%%%%%%%%%%%%%%%%%%%%%%%%%%%%%%%%%%%%%%%%%%%%%%%%%%%%%%%%%%%%%%%%%%%%%%%%%%%%%%%%%%%%%%%%%%%%%%%%%%%%%%%%%%%%%%%%%%%%%%%%%%%%%%%%%%%%%%%%%%%%%%%%%%%%%%%%%%%%%%%%%%%%%%%%%%%%%%%%%%%%%%%%%%%%%%%%%%%%%%%%%%%%%%%%%%%%%%%%%%%%%%%%%%%%%%%%%%%%%%%%%%%%%%%%%%%%%%%%%%%%%%%%%%%%%%%%%%%%%%%%%%%%%%%%%%%%%%%%%%%%%%%%%%%%%%%%%%%%%%%%%%%%%%%%%%%%%%%%%%%%%%%%%%%%%%%%%%%%%%%%%%%%%%%%%%%%%%%%%%%%%%%%%%%%%%%%%%%%%%%%%%%%%%%%%%%%%%%%%%%%%%%%%%%%%%%%%%%%%%%%%%%%%%%%%%%%%%%%%%%%%%%%%%%%%%%%%%%%%%%%%%%%%%%%%%%%%%%%%%%%%%%%%%%%%%%%%%%%%%%%%%%%%%%%%%%%%%%%%%%%%%%%%%%%%%%%%%%%%%%%%%%%%%%%%%%%%%%%%%%%%%%%%%%%%%%%%%%%%%%%%%%%%%%%%%%%%%%%%%%%%%%%%%%%%%%%%%%%%%%%%%%%%%%%%%%%%%%%%%%%%%%%%%%%%%%%%%%%%%%%%%%%%%%%%%%%%%%%%%%%%%%%%%%%%%%%%%%%%%%%%%%%%%%%%%%%%%%%%%%%%%%%%%%%%%%%%%%%%%%%%%%%%%%%%%%%%%%%%%%%%%%%%%%%%%%%%%%%%%%%%%%%%%%%%%%%%%%%%%%%%%%%%%%%%%%%%%%%%%%%%%%%%%%%%%%%%%%%%%%%%%%%%%%%%%%%%%%%%%%%%%%%%%%%%%%%%%%%%%%%%%%%%%%%%%%%%%%%%%%%%%%%%%%%%%%%%%%%%%%%%%%%%%%%%%%%%%%%%%%%%%%%%%%%%%%%%%%%%%%%%%%%%%%%%%%%%%%%%%%%%%%%%%%%%%%%%%%%%%%%%%%%%%%%%%%%%%%%%%%%%%%%%%%%%%%%%%%%%%%%%%%%%%%%%%%%%%%%%%%%%%%%%%%%%%%%%%%%%%%%%%%%%%%%

\section{General tools}

For positive integers $\alpha_1,\alpha_2,\dots,\alpha_k$, we define
$$
\Delta_{\alpha_1,\alpha_2,\dots,\alpha_k}(x_1,x_2,\dots,x_k)=\alpha_1T(x_1)+\alpha_2T(x_2)+\cdots+\alpha_kT(x_k).
$$
Recall that a sum 
$$
\Delta_{\alpha_1,\alpha_2,\dots,\alpha_k}(x_1,x_2,\dots,x_k) \ (\text{simply}, \ \Delta_{\alpha_1,\alpha_2,\dots,\alpha_k})
$$ 
of triangular numbers is called almost universal if it represents all nonnegative integers with finitely many exceptions. In particular, if the number of exceptions is one, then it is said to be almost universal with one exception, which is equivalent to the existence of an integer solution $(x_1,x_2,\dots,x_k)\in\mathbb{Z}^k$ of 
$$
\alpha_1(2x_1+1)^2+\alpha_2(2x_2+1)^2+\cdots+\alpha_k(2x_k+1)^2=8n+\alpha_1+\alpha_2\cdots+\alpha_k 
$$
for any nonnegative integer $n$ except a single one.
Furthermore, this is equivalent to the existence of an integer solution $(x_1,x_2,\dots,x_k)\in\mathbb{Z}^k$ of  
\begin{equation}\label{congruence condition}
\alpha_1x_1^2+\alpha_2x_2^2+\cdots+\alpha_kx_k^2=8n+\alpha_1+\alpha_2+\cdots+\alpha_k~\text{with}~x_1x_2\cdots x_k\equiv1\Mod2
\end{equation}
for any nonnegative integer $n$ except a single one.

Now, we introduce our strategy to prove that a sum $\Delta_{\alpha_1,\alpha_2,\dots,\alpha_k}$ of triangular numbers is almost universal with one exception.
At first, take a suitable ternary section $\Delta_{\alpha_{i_1},\alpha_{i_2},\alpha_{i_3}}$ of $\Delta_{\alpha_1,\alpha_2,\dots,\alpha_k}$, where $\{\alpha_{i_1},\alpha_{i_2},\alpha_{i_3}\} \subset \{\alpha_1,\alpha_2, \cdots ,\alpha_k\}$. 
Without loss of generality, we may assume that $\Delta_{\alpha_{i_1},\alpha_{i_2},\alpha_{i_3}}=\Delta_{\alpha_1,\alpha_2,\alpha_3}$.
We consider the equation
\begin{equation}\label{ternary form}
\alpha_1 x_1^2+\alpha_2 x_2^2+\alpha_3 x_3^2=8n+\alpha_1+\alpha_2+\alpha_3 ~\text{with}~ x_1x_2x_3\equiv1\Mod2.
\end{equation}
Note that Equation \eqref{ternary form} corresponds to the representations by a ternary quadratic form with congruence conditions. 
Since there are some method for determining the existence of representations of integers by a ternary quadratic form, we try to find a suitable method on reducing Equation \eqref{ternary form} to the representations of a ternary quadratic form, denoted by $f(x_1,x_2,x_3)$, without congruence conditions.
To explain our method, for example, assume that $\alpha_1\equiv\alpha_2\equiv0\Mod2$ and $\alpha_3 \equiv1\Mod2$. 
Then Equation \eqref{ternary form} has an integer solution if 
\begin{equation}\label{without congruence condition}
f(x_1,x_2,x_3)=\alpha_1(x_3-2x_1)^2+\alpha_{2}(x_3-2x_{2})^2+\alpha_3x_3^2=8n+\alpha_1+\alpha_2+\alpha_3
\end{equation}
has an integer solution. 
Hence, in this case, the problem can be reduced to the representations of a ternary quadratic form without congruence conditions.

After that for sufficiently large $n$, we find suitable $a_4,\dots,a_k\in\mathbb{Z}$ such that
\begin{equation}\label{conditions}
\begin{array}{rl}
\rm{(i)}   &  a_4\cdots a_k\equiv1\Mod2; \\
\rm{(ii)}  &  8n+\alpha_1+\alpha_2+\cdots+\alpha_k-(\alpha_4 a_4^2+\cdots+\alpha_k a_k^2)\geq0; \\
\rm{(iii)} &  8n+\alpha_1+\alpha_2+\cdots+\alpha_k-(\alpha_4 a_4^2+\cdots+\alpha_k a_k^2) \longrightarrow f(x_1,x_2,x_3). \\
\end{array}
\end{equation}
Then we know that Equation \eqref{congruence condition} has an integer solution.
Finally, we directly check that the sum $\Delta_{\alpha_1,\alpha_2,\dots,\alpha_k}$ of triangular numbers represents all remaining small integers except a single one.

In \cite{4-8}, \cite{regular}, and \cite{pentagonal}, we developed a method that determines whether or not integers in an arithmetic progression are represented by some particular ternary quadratic form. 
We briefly introduce this method for those who are unfamiliar with it. 
 
 Let $d$ be a positive integer and let $a$ be a nonnegative integer $(a\leq d)$. 
We define 
$$
S_{d,a}=\{dn+a \mid n \in \mathbb N \cup \{0\}\}.
$$
For two positive definite integral ternary quadratic forms $f,g$, we define
$$
R(g,d,a)=\{v \in (\mathbb{Z}/d\mathbb{Z})^3 \mid vM_gv^t\equiv a \ (\text{mod }d) \}
$$
and
$$
R(f,g,d)=\{T\in M_3(\mathbb{Z}) \mid  T^tM_fT=d^2M_g \}.
$$
Since $f$ and $g$ are positive definite, the above two sets are always finite.
A coset (or, a vector in the coset) $v \in R(g,d,a)$ is said to be {\it good} with respect to $f,g,d, \text{ and }a$ if there is a $T\in R(f,g,d)$ such that $\frac1d \cdot vT^t \in \mathbb{Z}^3$.  
The set of all good vectors in $R(g,d,a)$ is denoted by $R_f(g,d,a)$.   
If  $R(g,d,a)=R_f(g,d,a)$, we write  
$$
g\prec_{d,a} f.
$$ 

Now, we introduce two theorems which play a crucial role in proving our results.
\begin{thm}\label{good}
Under the same notations given above, if $g\prec_{d,a} f$, then 
$$
S_{d,a}\cap Q(g) \subset Q(f).
$$
\end{thm}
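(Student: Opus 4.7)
The plan is to unpack the two definitions and chase an element $n \in S_{d,a} \cap Q(g)$ to an explicit representation by $f$. First I would fix $x \in \mathbb{Z}^3$ with $x M_g x^t = n$, which exists since $n \in Q(g)$. Reducing $x$ modulo $d$ produces a class $v \in (\mathbb{Z}/d\mathbb{Z})^3$ satisfying $v M_g v^t \equiv n \equiv a \pmod{d}$, so $v \in R(g,d,a)$. The hypothesis $g \prec_{d,a} f$ asserts that every such class is good, so there exists $T \in R(f,g,d)$ with $\tfrac{1}{d}\, v T^t \in \mathbb{Z}^3$. Because $T$ has integer entries, this divisibility does not depend on which lift of $v$ we choose, so we also obtain $y := \tfrac{1}{d}\, x T^t \in \mathbb{Z}^3$.

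The heart of the argument is then a one-line computation exploiting the defining relation $T^t M_f T = d^2 M_g$ of matrices in $R(f,g,d)$:
$$
y M_f y^t \;=\; \tfrac{1}{d^2}\, x\,(T^t M_f T)\, x^t \;=\; \tfrac{1}{d^2}\, x\,(d^2 M_g)\, x^t \;=\; x M_g x^t \;=\; n.
$$
Thus $y$ is an integral representation of $n$ by $f$, proving $n \in Q(f)$ and hence $S_{d,a}\cap Q(g)\subset Q(f)$.

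There is no serious obstacle; the statement is essentially a formal consequence of how the good-vector formalism is designed. The only step that deserves a line of justification is the transfer of goodness from the residue class $v$ to the specific lift $x$: if $x \equiv v \pmod{d}$ and $T \in M_3(\mathbb{Z})$, then $(x - v)T^t/d \in \mathbb{Z}^3$, so $\tfrac{1}{d} x T^t \in \mathbb{Z}^3$ whenever $\tfrac{1}{d} v T^t \in \mathbb{Z}^3$. Everything else is definition-unwinding plus the identity above.
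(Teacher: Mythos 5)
Your proof is correct and is precisely the standard definition-unwinding argument (lift a representing vector, use goodness to divide by $d$ after applying $T^t$, and conclude via $T^tM_fT=d^2M_g$); the paper itself simply cites Lemma 2.2 of the reference \cite{regular}, which establishes the statement by this same computation. Your care about independence of the choice of lift of $v$ is the right (and only) point needing justification, and you handle it correctly.
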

\begin{proof}
The theorem follows directly from Lemma 2.2 of \cite{regular} (see also Theorem 2.1 in \cite{Kaps}). 
\end{proof} 

\begin{thm}\label{pme}
Assume that  $T\in M_3(\mathbb Z)$ satisfies the following conditions:
\begin{enumerate}
\item[(i)] $\frac1dT$ has an infinite order;
\item[(ii)] $T^tM_gT=d^2M_g$;
\item[(iii)]  for any vector $v \in \mathbb Z^3$ such that  $v\, (\text{mod }d)\in B_f(g,d,a)$, $\frac1d\cdot vT^t\in \mathbb Z^3$. 
\end{enumerate}
Then we have
$$
S_{d,a} \cap Q(g)\setminus \{g(z)\cdot s^2\mid s\in\mathbb{Z}\}\subset Q(f),
$$
where the vector $z$ is any integral primitive eigenvector of $T$. 
\end{thm}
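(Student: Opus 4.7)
The plan is to iterate the map $v \mapsto \frac{1}{d} v T^t$ on integer representations of $n$ by $g$, using condition (iii) to keep the orbit integral whenever we sit in a bad coset, and using conditions (i)--(ii) to force the iteration to eventually reach a good coset unless $n$ is of the excluded form $g(z)s^2$.

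I would start by fixing $n \in S_{d,a} \cap Q(g)$ with $n \ne g(z) s^2$ for all $s \in \mathbb{Z}$, choosing $w_0 \in \mathbb{Z}^3$ with $g(w_0) = n$, and recursively defining $w_{k+1} := \frac{1}{d} w_k T^t$ so long as $w_k \bmod d \in B_f(g,d,a)$ is bad. Condition (iii) keeps each $w_{k+1}$ in $\mathbb{Z}^3$, while condition (ii) gives
$$
g(w_{k+1}) = \tfrac{1}{d^2} w_k (T^t M_g T) w_k^t = g(w_k) = n,
$$
so every $w_{k+1}$ again lies in $R(g,d,a)$. As soon as some $w_k \bmod d \in R_f(g,d,a)$ is good, the definition of a good vector supplies $T' \in R(f,g,d)$ with $u := \frac{1}{d} w_k T'^t \in \mathbb{Z}^3$, and the computation $f(u) = \frac{1}{d^2} w_k (T'^t M_f T') w_k^t = g(w_k) = n$ exhibits $n \in Q(f)$, exactly as in the proof of Theorem \ref{good}.

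To rule out the remaining case, I would suppose every $w_k$ is bad. All $w_k$ then lie in the finite set $\{v \in \mathbb{Z}^3 : g(v) = n\}$, and the map $v \mapsto \frac{1}{d} v T^t$ is injective because $T$ is invertible over $\mathbb{R}$ by (ii). Hence the orbit is periodic from the start, giving $w_0 \, (\tfrac{1}{d} T^t)^j = w_0$ for some $j \ge 1$; transposing, the column $w_0^t$ is fixed by $(\tfrac{1}{d} T)^j$. Condition (ii) places $\tfrac{1}{d} T$ in $O(g; \mathbb{R})$, and (i) gives it infinite order, so its spectrum on $\mathbb{R}^3$ must be $\{\epsilon, e^{\pm i\theta}\}$ with $\epsilon \in \{\pm 1\}$ and $\theta/(2\pi) \notin \mathbb{Q}$; consequently the $1$-eigenspace of any positive power of $\tfrac{1}{d} T$ is contained in the real eigenline $\mathbb{R} z$. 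Thus $w_0^t = cz$ for some $c \in \mathbb{R}$, which by primitivity of $z$ and integrality of $w_0$ must be an integer, yielding $n = g(w_0) = c^2 g(z)$, contrary to the choice of $n$.

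The main obstacle will be this eigenvalue/dynamics step: one must verify carefully that infinite order inside $O(g;\mathbb{R})$ on $\mathbb{R}^3$ forbids any vector from being fixed by a positive power of $\tfrac{1}{d} T$ other than along the real integral eigenline $\mathbb{R} z$, and moreover that this eigenline is exactly the one spanned by the primitive $z$ whose eigenvalue under $T$ is forced to be $\pm d$ by (ii). It is this link that converts the purely dynamical obstruction — an infinite orbit cannot fit inside a finite set — into the precise excluded family $\{g(z) s^2 : s \in \mathbb{Z}\}$ stated in the theorem.
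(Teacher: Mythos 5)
Your argument is correct and is essentially the standard proof of this result: the paper itself only cites Theorem 2.1 of \cite{4-8}, and the argument given there (following the method of \cite{regular}) is exactly your orbit iteration $w_{k+1}=\frac1d w_kT^t$, using (iii) for integrality on bad cosets, (ii) to preserve the represented value and hence confine the orbit to the finite set of representations of $n$, and (i) to force any periodic point onto the unique real (hence rational) eigenline $\mathbb Z z$, producing the excluded family $\{g(z)s^2\}$. The eigenvalue analysis you flag as the delicate step is handled correctly: $\frac1dT$ lies in the compact orthogonal group of the positive definite form $M_g$, so infinite order forces spectrum $\{\epsilon, e^{\pm i\theta}\}$ with $\theta/2\pi\notin\mathbb Q$, and the fixed space of every positive power is contained in the $\epsilon$-eigenline.
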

\begin{proof}
See Theorem 2.1 of \cite{4-8}.
\end{proof}
We define  $B_f(g,d,a)=R(g,d,a)\setminus R_f(g,d,a)$ and its cardinality is denoted by $|B_f(g,d,a)|~(simply, |B|)$.
In general, if $d$ is large, then it is hard to compute the set $B_f(g,d,a)$ exactly by hand. 
A MAGMA based  computer program for computing this set  is available upon request to the author.

%%%%%%%%%%%%%%%%%%%%%%%%%%%%%%%%%%%%%%%%%%%%%%%%%%%%%%%%%%%%%%%%%%%%%%%%%%%%%%%%%%%%%%%%%%%%%%%%%%%%%%%%%%%%%%%%%%%%%%%%%%%%%%%%%%%%%%%%%%%%%%%%%%%%%%%%%%%%%%%%%%%%%%%%%%%%%%%%%%%%%%%%%%%%%%%%%%%%%%%%%%%%%%%%%%%%%%%%%%%%%%%%%%%%%%%%%%%%%%%%%%%%%%%%%%%%%%%%%%%%%%%%%%%%%%%%%%%%%%%%%%%%%%%%%%%%%%%%%%%%%%%%%%%%%%%%%%%%%%%%%%%%%%%%%%%%%%%%%%%%%%%%%%%%%%%%%%%%%%%%%%%%%%%%%%%%%%%%%%%%%%%%%%%%%%%%%%%%%%%%%%%%%%%%%%%%%%%%%%%%%%%%%%%%%%%%%%%%%%%%%%%%%%%%%%%%%%%%%%%%%%%%%%%%%%%%%%%%%%%%%%%%%%%%%%%%%%%%%%%%%%%%%%%%%%%%%%%%%%%%%%%%%%%%%%%%%%%%%%%%%%%%%%%%%%%%%%%%%%%%%%%%%%%%%%%%%%%%%%%%%%%%%%%%%%%%%%%%%%%%%%%%%%%%%%%%%%%%%%%%%%%%%%%%%%%%%%%%%%%%%%%%%%%%%%%%%%%%%%%%%%%%%%%%%%%%%%%%%%%%%%%%%%%%%%%%%%%%%%%%%%%%%%%%%%%%%%%%%%%%%%%%%%%%%%%%%%%%%%%%%%%%%%%%%%%%%%%%%%%%%%%%%%%%%%%%%%%%%%%%%%%%%%%%%%%%%%%%%%%%%%%%%%%%%%%%%%%%%%%%%%%%%%%%%%%%%%%%%%%%%%%%%%%%%%%%%%%%%%%%%%%%%%%%%%%%%%%%%%%%%%%%%%%%%%%%%%%%%%%%%%%%%%%%%%%%%%%%%%%%%%%%%%%%%%%%%%%%%%%%%%%%%%%%%%%%%%%%%%%%%%%%%%%%%%%%%%%%%%%%%%%%%%%%%%%%%%%%%%%%%%%%%%%%%%%%%%%%%%%%%%%%%%%%%%%%%%%%%%%%%%%%%%%%%%%%%%%%%%%%%%%%%%%%%%%%%%%%%%%%%%%%%%%%%%%%%%%%%%%%%%%%%%%%%%%%%%%%%%%%%%

\section{Proofs of Theorem \ref{1} }
We say an almost universal sum $\Delta_{\alpha_1,\alpha_2,\dots,\alpha_k}$ of triangular numbers with one exception is {\it proper} if for any proper subset $\{i_1,i_2,\dots,i_u\}\subset \{1,2,\dots,k\}$, the partial sum $\Delta_{\alpha_{i_l},\alpha_{i_2},\dots,\alpha_{i_u}}$ doesn't represent at least two nonnegative integers. 
When the sum $\Delta_{\alpha_1,\alpha_2,\dots,\alpha_k}$ of triangular numbers is not universal, the $n$-th nonnegative integers that is not represented by $\Delta_{\alpha_1,\alpha_2,\dots,\alpha_k}$ is called an $n$-th {\it truant} of $\Delta_{\alpha_1,\alpha_2,\dots,\alpha_k}$ and denoted by $\mathfrak{T}_n(\Delta_{\alpha_1,\alpha_2,\dots,\alpha_k})$ if it exists.

\begin{proof}[Proof of Theorem \ref{1}] Let $\alpha_1,\alpha_2,\dots,\alpha_k$ be positive integers. Assume that a sum $\Delta_{\alpha_1,\alpha_2,\dots,\alpha_k}$ of triangular numbers is almost universal with one exception $1$.
Without loss of generality, we may assume that $\alpha_1\leq\alpha_2\leq\cdots\leq\alpha_k$.
We know that $\alpha_1$ should be $2$ since it represents all nonnegative integers except 1.
Then $2\leq\alpha_2\leq3$ since $\mathfrak{T}_2(\Delta_2)=3$.
Note that 
$$
\mathfrak{T}_2(\Delta_{\alpha_1,\alpha_2})=
\begin{cases}
3\quad \text{if}~ (\alpha_1,\alpha_2)=(2,2),\\
4\quad\text{if}~ (\alpha_1,\alpha_2)=(2,3).
\end{cases}
$$
Therefore $(\alpha_1,\alpha_2,\alpha_3)$ should be one of the followings:
$$
(2,2,2),\quad(2,2,3),\quad(2,3,3),\quad\text{and}\quad(2,3,4).
$$
One may easily check that there are no ternary almost universal sums of triangular numbers with one exception $1$. 
Indeed, for each of the above four cases, the second truant is
$$
\mathfrak{T}_2(\Delta_{\alpha_1,\alpha_2,\alpha_3})=
\begin{cases}
{\setlength\arraycolsep{1pt}
\begin{array}{llll}
&3\quad &\text{if}&~(\alpha_1,\alpha_2,\alpha_3)=(2,2,2),\\ 
&10\quad &\text{if}&~(\alpha_1,\alpha_2,\alpha_3)=(2,2,3),\\
&4\quad &\text{if}&~(\alpha_1,\alpha_2,\alpha_3)=(2,3,3),\\
&8\quad &\text{if}&~(\alpha_1,\alpha_2,\alpha_3)=(2,3,4).
\end{array}}
\end{cases}
$$
Therefore, $\alpha_3\leq\alpha_4\leq\mathfrak{T}_2(\Delta_{\alpha_1,\alpha_2,\alpha_3})$ for each possible case. 
So there are 17 candidates of quaternary almost universal sums of triangular numbers with one exception 1. 
One may easily check that 6 sums of them don't represent at least two nonnegative integers.
Actually, we know that
\begin{equation}\label{truant}
\mathfrak{T}_2(\Delta_{\alpha_1,\alpha_2,\alpha_3,\alpha_4})=
\begin{cases}
{\setlength\arraycolsep{1pt}
\begin{array}{llllll}
&\mathfrak{T}_2(\Delta_{\alpha_1,\alpha_2,\alpha_3})\quad &\text{if}&~(\alpha_1,\alpha_2,\alpha_3,\alpha_4)&=&(2,2,2,2),(2,2,3,9),\\
&~&~&~&~&(2,3,3,3),(2,3,4,7),\\ 
&19\quad &\text{if}&~(\alpha_1,\alpha_2,\alpha_3,\alpha_4)&=&(2,2,3,3),\\
&16\quad &\text{if}&~(\alpha_1,\alpha_2,\alpha_3,\alpha_4)&=&(2,2,3,6).\\ 
\end{array}}
\end{cases}
\end{equation}
We will prove that remaining 11 quaternary sums of triangular numbers represent all nonnegative integers except 1 (see Table 1).

Now, we classify quinary almost universal sums of triangular numbers with one exception 1.
From \eqref{truant}, we have $\alpha_4\leq\alpha_5\leq\mathfrak{T}_2(\Delta_{\alpha_1,\alpha_2,\alpha_3,\alpha_4})$ for each possible case.
So there are 36 candidates of quinary almost universal sums of triangular numbers with one exception 1.
One may easily check that there are 12 sums among them that are almost universal with one exception 1 but not proper.
Furthermore, if $\alpha_5=\mathfrak{T}(\Delta_{\alpha_1,\alpha_2,\alpha_3,\alpha_4})-1$ for each possible case, then the sum $\Delta_{\alpha_1,\alpha_2,\alpha_3,\alpha_4,\alpha_5}$ is not almost universal with one exception since it doesn't represent $1$ and $\mathfrak{T}_2(\Delta_{\alpha_1,\alpha_2,\alpha_3,\alpha_4})$.
We will prove that remaining 18 quinary sums of triangular numbers represent all nonnegative integers except 1 (see Table 1).

Finally, for $k\geq6$ we classify all $k$-ary almost universal sums of triangular numbers with one exception 1.
For each possible case, since $\mathfrak{T}_2(\Delta_{\alpha_1,\alpha_2,\dots\alpha_{k-1}})=\alpha_{k-1}+1$, 
we know that $\alpha_{k-1}\leq\alpha_k\leq\alpha_{k-1}+1$. 
Note that $\Delta_{\alpha_1,\alpha_2,\dots,\alpha_{k-1},\alpha_{k-1}}$ is not almost universal with one exception since it doesn't represent $1$ and $\alpha_{k-1}+1$.
Furthermore, one may directly check that $\Delta_{\alpha_1,\alpha_2,\dots,\alpha_{k-1},\alpha_{k-1}+1}$ is almost universal with one exception 1 but not proper.
Therefore, there are no $k$-ary proper almost universal sums of triangular numbers with one exception $1$ for any integer $k\geq6$ (see Table 1).

Now, we prove that above 11 quaternary and 18 quinary sums of triangular numbers are proper almost universal with one exception $1$.
In all cases, it is enough to show that each sum represents all nonnegative integers except 1 since its properness is clear.
\begin{table}[hpt!]
\caption{Proper almost universal sums with one exception 1}
\vspace{-3mm}\begin{center}
\renewcommand{\arraystretch}{0.9}\renewcommand{\tabcolsep}{1mm}
\begin{tabular}{l|l|l}
\hline
Sums &Candidates & Conditions on $\alpha_k$ \\
\hline
$\Delta_{2,2,2,\alpha_4}$ & $2\leq \alpha_4 \leq 3$ & $\alpha_4\neq2$\\
$\Delta_{2,2,3,\alpha_4}$ & $3\leq \alpha_4 \leq 10$ & $\alpha_4\neq3,6,9$\\
$\Delta_{2,3,3,\alpha_4}$ & $3\leq \alpha_4 \leq 4$ & $\alpha_4\neq3$\\
$\Delta_{2,3,4,\alpha_4}$ & $4\leq \alpha_4 \leq 8$ & $\alpha_4\neq7$\\
\hline
$\Delta_{2,2,2,2,\alpha_5}$ & $2\leq \alpha_5 \leq 3$ & $\alpha_5\neq2,3^{\dagger}$\\
$\Delta_{2,2,3,3,\alpha_5}$ & $3\leq \alpha_5 \leq 19$ & $\alpha_5\neq4^{\dagger},5^{\dagger},7^{\dagger},8^{\dagger},10^{\dagger},18$\\
$\Delta_{2,2,3,6,\alpha_5}$ & $6\leq \alpha_5 \leq 16$ & $\alpha_5\neq7^{\dagger},8^{\dagger},10^{\dagger},15$\\
$\Delta_{2,2,3,9,\alpha_5}$ & $9\leq \alpha_5 \leq 10$ & $\alpha_5\neq9,10^{\dagger}$\\
$\Delta_{2,3,3,3,\alpha_5}$ & $3\leq \alpha_5 \leq 4$ & $\alpha_5\neq3,4^{\dagger}$\\
$\Delta_{2,3,4,7,\alpha_5}$ & $7\leq \alpha_5 \leq 8$ & $\alpha_5\neq7,8^{\dagger}$\\
\hline
$\Delta_{\alpha_1,\alpha_2,\dots,\alpha_{k-1},\alpha_k}(k\geq6)$ & $\alpha_{k-1}\leq\alpha_k\leq\alpha_{k-1}+1$ & $\alpha_k\neq\alpha_{k-1},\alpha_{k-1}+1^{\dagger}$\\
\hline
\end{tabular}
\end{center}
\end{table}
%%%%%%%%%%%%%%%%%%%%%%%%%%%%%%%%%%%%%%%%%%%%%%%%%%%%%%%%%%%%%%%%%%%%%%%%%%%%%%%%%%%%%%%%%%%%%%%%%%%%%%%%%%%%

\vspace{-0.5pc}
\noindent\textbf{(i)} Let $(\alpha_1,\alpha_2,\alpha_3)=(2,2,2)$. 
We show that $\Delta_{2,2,2,3}$ is an almost universal sum of triangular numbers with one exception 1.
Since $\Delta_{1,1,1}$ is universal, $\Delta_{2,2,2}$ represents all nonnegative even integers.
Let $n$ be an odd integer greater than 1.
Since $n-3$ is represented by $\Delta_{2,2,2}$, $n$ is represented by $\Delta_{2,2,2,3}$.
Therefore, $\Delta_{2,2,2,3}$ is an almost universal sum of triangular numbers with one exception 1.
%%%%%%%%%%%%%%%%%%%%%%%%%%%%%%%%%%%%%%%%%%%%%%%%%%%%%%%%%%%%%%%%%%%%%%%%%%%%%%%%%%%%%%%%%%%%%%%%%%%%%%%%%%%%%
\vskip0.5pc
\noindent\textbf{(ii)} Let $(\alpha_1,\alpha_2,\alpha_3)=(2,2,3)$. We show that $\Delta_{2,2,3,\alpha_4}$ ($3\leq \alpha_4\leq 10$, $\alpha_4\neq3,6,9$) are almost universal sums of triangular numbers with one exception $1$. 
Since the proofs are quite similar to each other, we only provide the proof of $\Delta_{2,2,3,4}$.
By Equation \eqref{congruence condition}, it suffices to show that the equation
\begin{equation}\label{2234}
2x^2+2y^2+3z^2+4t^2 = 8n+11
\end{equation}
has an integer solution $(x,y,z,t)\in\mathbb{Z}^4$ such that  $xyzt\equiv1 \Mod2$ for any nonnegative integer $n$ except 1.
If $n=0$ or $2\leq n\leq3$, then one may directly check that Equation \eqref{2234} has a desired integer solution.
Therefore, we may assume that $n\geq4$. 
Note that the genus of $f(x,y,z)=2(4x+y)^2+2y^2+3z^2$ consists of 
$$
M_f=\langle3,4,16\rangle\quad\text{and}\quad M_2=\langle4\rangle\perp\begin{pmatrix}7&1\\1&7\end{pmatrix}.
$$
For a nonnegative integer $m$, if $m\equiv 7\Mod8$ and $m\neq3^{2u+1}(3v+2)$ for any nonnegative integers $u$ and $v$ , then $m$ is represented by $M_f$ or $M_2$ by 102:5 of \cite{om}, for it is represented by $M_f$ over $\mathbb{Z}_p$ for any prime $p$.
One may easily check that
$$
M_2\prec_{8,7}M_f.
$$
Note that $8n+11-4d^2\equiv7\Mod8$ and $8n+11-4d^2\not\equiv0\Mod3$, where
$$
d=\begin{cases}
1\quad\text{if}~ 8n+11\equiv0 \Mod 3,\\
3 \quad\text{if}~ 8n+11\not\equiv0 \Mod 3.\\
\end{cases}
$$
Furthermore, since we are assuming $n\geq4$, $8n+11-4d^2$ is positive.
Therefore, the equation
$$
2x^2+2y^2+3z^2=8n+11-4d^2
$$
has an integer solution $(x,y,z)\in\mathbb{Z}^3$ such that $x\equiv y\Mod4$ by Theorem \ref{good}.
This completes the proof.

%%%%%%%%%%%%%%%%%%%%%%%%%%%%%%%%%%%%%%%%%%%%%%%%%%%%%%%%%%%%%%%%%%%%%%%%%%%%%%%%%%%%%%%%%%%%%%%%%%%%%%%%%%%%%%
\vskip0.5pc
\noindent\textbf{(iii)} Let $(\alpha_1,\alpha_2,\alpha_3)=(2,3,3)$.
We show that $\Delta_{2,3,3,4}$ is an almost universal sum of triangular numbers with one exception 1.
By Equation \eqref{congruence condition}, it suffices to show that the equation
\begin{equation}\label{2334}
2x^2+3y^2+3z^2+4t^2=8n+12
\end{equation}
has an integer solution $(x,y,z,t)\in\mathbb{Z}^4$ such that $xyzt\equiv1\Mod2$ for any nonnegative integer $n$ except 1.
The class number of $f(x,y,z)=2x^2+3y^2+3z^2$ is one. 
For a nonnegative integer $m$, if $m\equiv0\Mod8$ and $m\not\equiv3^{2u}(3v+1)$ for any nonnegative integers $u$ and $v$, then $m$ is represented by $f$ over $\mathbb{Z}_p$ for any prime $p$, in particular, it is primitively represented by $f$ over $\mathbb{Z}_2$. 
Let $8n+12=3^{2\ell}(8k+4)$ for some nonnegative integers $\ell$ and $k$ such that $8k+4\not\equiv0\Mod{9}$.
If $k=0$, note that for any $\ell\geq1$,
$$
2(3^{\ell-1})^2+3(3^{\ell-1})^2+3(3^{\ell})^2+4(3^{\ell-1})^2=4\cdot3^{2\ell}.
$$
If $k=2$, note that for any $\ell\geq1$,
$$
2(7\cdot3^{\ell-1})^2+3(5\cdot3^{\ell-1})^2+3(3^{\ell-1})^2+4(3^{\ell-1})^2=20\cdot3^{2\ell}.
$$
One may directly check that if $0\leq k\leq12$, $k\neq0,2$, then the equation
$$
2x^2+3y^2+3z^2+4t^2=8k+4
$$
has an integer solution $(x,y,z,t)\in\mathbb{Z}^4$ such that $xyzt\equiv1\Mod2$.
Therefore we may assume that $k\geq13$.
One may easily check that $8k+4-4d^2$ is represented by $f$ over $\mathbb{Z}_p$ for any $p$, where
$$
d=\begin{cases}
3\quad\text{if}~8k+4\equiv r\Mod9~\text{for any}~r\in\{2,3,5,6,8\},\\
1\quad\text{if}~8k+4\equiv r\Mod9~\text{for any}~r\in\{1,7\},\\
5\quad\text{if}~8k+4\equiv4\Mod9,
\end{cases}
$$
in particular, it is primitively represented by $f$ over $\mathbb{Z}_2$. 
Furthermore, since we are assuming $k\geq12$, $8k+4-4d^2$ is positive.
By 102:5 of \cite{om}, the equation 
$$
2x^2+3y^2+3z^2=8k+4-4d^2
$$ 
has an integer solution $(x,y,z)\in\mathbb{Z}^3$ such that $xyz\equiv1\Mod2$. 
This completes the proof.
%%%%%%%%%%%%%%%%%%%%%%%%%%%%%%%%%%%%%%%%%%%%%%%%%%%%%%%%%%%%%%%%%%%%%%%%%%%%%%%%%%%%%%%%%%%%%%%%%%%%%%%%%%%%
\vskip0.5pc
\noindent\textbf{(iv)} Let $(\alpha_1,\alpha_2,\alpha_3)=(2,3,4)$.
We show that $\Delta_{2,3,4,\alpha_4}$ ($4\leq\alpha_4\leq8$, $\alpha_4\neq7$) are almost universal sums of triangular numbers with one exception $1$.

Assume $\alpha_4=4,6$.
Since the proofs are quite similar to each other, we only provide the proof of $\Delta_{2,3,4,4}$.
By Equation \ref{congruence condition}, it suffices to show that the equation
\begin{equation}\label{2344}
2x^2+3y^2+4z^2+4t^2=8n+13
\end{equation}
has an integer solution $(x,y,z,t)\in\mathbb{Z}^4$ such that $xyzt\equiv1\Mod2$ for any nonnegative integer $n$ except 1. 
If $n=0$ or $2\leq n\leq 11$, then one may directly check that Equation \eqref{2344} has a desired integer solution.
Therefore, we may assume that $n\geq12$.
Note that the genus of $f(x,y,z)=2x^2+3y^2+4z^2$ consists of 
$$
f(x,y,z)\quad \text{and}\quad g(x,y,z)=x^2+2y^2+12z^2.
$$
For a nonnegative integer $m$, if $m\equiv1\Mod8$, then $m$ is represented by $M_f$ or $M_2$ by 102:5 of \cite{om}, for it is represented by $M_f$ over $\mathbb{Z}_p$ for any prime $p$. 
One may easily show that
$$
B_f(g,5,1)=\{\pm(1,0,0)\}\quad\text{and}\quad B_f(g,5,4)=\{\pm(2,0,0)\}.
$$
In each case, if we define $\displaystyle T=\begin{pmatrix}5&0&0\\0&1&-12\\0&2&1\end{pmatrix}$, then one may easily show that it satisfies all conditions in Theorem \ref{pme}.
Note that $z=\pm(1,0,0)$ are the only integral primitive eigenvectors of $T$. 
Therefore, we have
$$
S_{5,r}\cap Q(g)\setminus\{s^2|s\in\mathbb{Z}\}\subset Q(f), 
$$
for any $r\in\{1,4\}$.
Since $g$ is contained in the spinor genus of $f$, every square $t^2$ of an integer that has a prime divisor greater than 3 is represented by $f$ by Lemma 2.4 in \cite{ternary}.
If $t$ is divisible by $2$(3), then $t^2$ is represented by $f$ since $4(9, \text{ respectively})$ is represented by $f$. 
Therefore, every integer greater than 1 that is congruent to $1$ modulo $8$ and congruent to $1$ or $4$ modulo $5$ is represented by $f$.
Note that $8n+13-4d^2$ is congruent to $1$ modulo $8$ and congruent to $1$ or $4$ modulo $5$, where 
$$
d=\begin{cases}
1\quad\text{if}~8n+13\equiv r\Mod5,~\text{for any}~r\in\{0,3\},\\
5\quad\text{if}~8n+13\equiv r\Mod5,~\text{for any}~r\in\{1,4\},\\
3\quad\text{if}~8n+13\equiv2\Mod5.
\end{cases}
$$
Furthermore, since we are assuming $n\geq12$, $8n+13-4d^2\geq2$.
Therefore, the equation
$$
2x^2+3y^2+4z^2=8n+13-4d^2
$$
has an integer solution.
This completes the proof. 

Assume $\alpha_4=5$.
By Equation \eqref{congruence condition}, It suffices to show that the equation 
\begin{equation}\label{2345}
2x^2+3y^2+4z^2+5t^2=8n+14
\end{equation}
has an integer solution $(x,y,z,t)\in\mathbb{Z}^4$ such that $xyzt\equiv1\Mod2$ for any nonnegative integer $n$ except 1. 
If $n=0$ or $2\leq n\leq 218$, then one may directly check that Equation \eqref{2345} has a desired integer solution. 
Therefore we may assume that $n\geq 219$. 
Note that the genus of $f(x,y,t)=2(2x+y)^2+3y^2+5t^2$ consists of 
$$
M_f=\begin{pmatrix}5&1\\1&5\end{pmatrix}\perp\langle5\rangle\quad\text{and}\quad M_2=\langle1,1,120\rangle.
$$
For a nonnegative integer $m$, if $m\equiv2\Mod8$ and $m\not\equiv0\Mod3$, then $m$ is represented by $M_f$ or $M_2$ by 102:5 of \cite{om}, for it is represented by $M_f$ over $\mathbb{Z}_p$ for any prime $p$. 
One may easily check that  
$$
M_2\prec_{7,r}M_f
$$
for any $r\in\{0,3,5,6\}$.
Assume that $8n+14\not\equiv0\Mod3$. 
Note that $8n+14-4d^2\equiv2 \Mod8$, $8n+14-4d^2\not\equiv0\Mod3$, and $8n+14-4d^2\equiv r\Mod7, ~\text{for some}~ r\in\{0,3,5,6\}$, where
$$
d=\begin{cases}
\aligned
&21&\quad&\text{if}~8n+14\equiv r\Mod7 ~\text{for any}~ r\in\{0,3,5,6\},\\
&3&\quad&\text{if}~8n+14\equiv r\Mod7~\text{for any}~r\in\{1,4\},\\
&9&\quad&\text{if}~8n+14\equiv 2\Mod7.
\endaligned
\end{cases}
$$
Assume that $8n+14\equiv0\Mod3$.
Note that $8n+14-4d^2\equiv2\Mod8$, $8n+14-4d^2\not\equiv0\Mod3$, and $8n+14-4d^2\equiv r\Mod7, ~\text{for some}~ r\in\{0,3,5,6\}$, where
$$
d=\begin{cases}
\aligned
&7&\quad&\text{if}~8n+14\equiv r\Mod7 ~\text{for any}~ r\in\{0,3,5,6\},\\
&1&\quad&\text{if}~8n+14\equiv r\Mod7~\text{for any}~ r\in\{2,4\},\\
&5&\quad&\text{if}~8n+14\equiv 1\Mod7.
\endaligned
\end{cases}
$$
Furthermore, since we are assuming $n\geq219$, $8n+14-4d^2$ is positive.
Therefore, the equation
$$
2x^2+3y^2+5t^2=8n+14-4d^2
$$
has an integer solution by Theorem \ref{good}. 
This completes the proof.

Assume $\alpha_4=8$.
By Equation \eqref{congruence condition}, it suffices to show that the equation
\begin{equation}\label{2348}
2x^2+3y^2+4z^2+8t^2=8n+17
\end{equation}
has an integer solution $(x,y,z,t)\in\mathbb{Z}^4$ such that $xyzt\equiv1\Mod2$ for any nonnegative integer $n$ except 1.
If $n=0$, then one may directly check that Equation \eqref{2348} has a desired integer solution. 
Therefore, we may assume that $n\geq2$. 
Note that the class number of $f(x,z,t)=2x^2+4(2z+t)^2+8t^2$ is one.
For a nonnegative integer $m$, if $m\equiv14\Mod{16}$, then it is represented by $f$.
Note that $8n+17-3d^2\equiv14\Mod{16}$, where
$$
d=\begin{cases}
1\quad\text{if}~n\equiv0\Mod2,\\
3\quad\text{if}~n\equiv1\Mod2.
\end{cases}
$$
Furthermore, since we are assuming $n\geq2$, $8n+17-3d^2$ is positive.
Therefore, the equation
$$
2x^2+4z^2+8t^2=8n+17-3d^2
$$
has an integer solution such that $z\equiv t\Mod2$. 
This completes the proof.

%%%%%%%%%%%%%%%%%%%%%%%%%%%%%%%%%%%%%%%%%%%%%%%%%%%%%%%%%%%%%%%%%%%%%%%%%%%%%%%%%%%%%%%%%%%%%%%%%%%%%%%%%%%%
\vskip0.5pc
\noindent\textbf{(v)} Let $(\alpha_1,\alpha_2,\alpha_3,\alpha_4)=(2,2,3,3)$.
We show that $\Delta_{2,2,3,3,\alpha_5}$ ($3\leq\alpha_5\leq19$, $\alpha_5\neq4,5,7,8,10,18$) are almost universal sums of triangular numbers with one exception 1.

If $\alpha_5\not\equiv0\Mod3$, then the proofs are quite similar to the proof of $\Delta_{2,2,3,4}$ in the case (ii).

Assume that $\alpha_5\equiv0\Mod3$.
Since the proofs are quite similar to each other. we only provide the proof of $\Delta_{2,2,3,3,3}$.
By Equation \eqref{congruence condition}, it suffices to show that
\begin{equation}\label{22333}
2x^2+2y^2+3z^2+3t^2+3s^2=8n+13
\end{equation}
has an integer solution $(x,y,z,t,s)\in\mathbb{Z}^5$ such that $xyzts\equiv1\Mod2$ for any nonnegative integer $n$ except 1.
Let $8n+13=3^{2\ell}(8k+5)$ for some nonnegative integers $\ell$ and $k$ such that $8k+5\not\equiv0 \Mod{9}$. 
For the case when $k=0$, note that for any $\ell\geq1$, 
$$
2(3^{\ell})^2+2(3^{\ell})^2+3(3^{\ell-1})^2+3(3^{\ell-1})^2+3(3^{\ell-1})^2=5\cdot3^{2\ell}.
$$
For the case when $k=2$, note that for any $\ell\geq1$,
$$
2(3^{\ell+1})^2+2(3^{\ell})^2+3(3^{\ell-1})^2+3(3^{\ell-1})^2+3(3^{\ell-1})^2=21\cdot3^{2\ell}.
$$
If $0\leq k\leq6$ and $k\neq0,2$, then one may directly check that the equation
$$
2x^2+2y^2+3z^2+3t^2+3s^2=8k+5
$$
has an integer solution $(x,y,z,t,s)\in\mathbb{Z}^5$ such that $xyzts\equiv1\Mod2$.
Therefore, we may assume that $k\geq7$.  
Note that $8k+5-3d^2-3e^2\equiv7\Mod8$ and $8k+5-3d^2-3e^2\neq3^{2u+1}(3v+2)$ for any nonnegative integers $u$ and $v$, where
$$
(d,e)=\begin{cases}
\aligned
&(1,1)\quad &\text{if}&~ 8k+5\not\equiv0\Mod3,\\
&(3,3)\quad &\text{if}&~ 8k+5\equiv3\Mod9,\\
&(1,3)\quad &\text{if}&~ 8k+5\equiv6\Mod9.
\endaligned
\end{cases}
$$
Furthermore, since we are assuming that $k\geq7$, $8k+5-3d^2-3e^2$ is positive and it is represented by $2(4x+y)^2+2y^2+3z^2$ (see the proof of $\Delta_{2,2,3,4}$ in the case (ii)). 
Therefore, the equation
$$
2x^2+2y^2+3z^2=8k+5-3d^2-3e^2
$$
has an integer solution such that $x\equiv y \Mod4$. 
This completes the proof.

%%%%%%%%%%%%%%%%%%%%%%%%%%%%%%%%%%%%%%%%%%%%%%%%%%%%%%%%%%%%%%%%%%%%%%%%%%%%%%%%%%%%%%%%%%%%%%%%%%%%%%%%%%%%
\vskip0.5pc
\noindent\textbf{(vi)} Let $(\alpha_1,\alpha_2,\alpha_3,\alpha_4)=(2,2,3,6)$.
We show that $\Delta_{2,2,3,6,\alpha_5}$ ($6\leq\alpha_5\leq16,~\alpha_5\neq7,8,10,15$) are almost universal sums of triangular numbers with one exception $1$.

If $\alpha_5\not\equiv0\Mod3$, then the proofs are quite similar to the proof of  $\Delta_{2,2,3,4}$ in the case (ii).

If $\alpha_5=6,12$, then the proofs are quite similar to the proof of $\Delta_{2,2,3,3,3}$ in the case (v).

Assume $\alpha_5=9$.
By Equation \eqref{congruence condition}, it suffices to show that 
\begin{equation}\label{22369}
2x^2+2y^2+3z^2+6t^2+9s^2=8n+22
\end{equation}
has an integer solution $(x,y,z,t,s)\in\mathbb{Z}^5$ such that $xyzts\equiv1\Mod2$ for any nonnegative integer $n$ except 1.
If $n=0$, then one may directly check that Equation \eqref{22369} has a desired integer solution.
Therefore, we may assume that $n\geq2$.
Note that the class number of $f(z,t,s)=3(2z+t)^2+6t^2+9s^2$ is one. 
For an integer $m$, if $m\equiv2\Mod8$ and $m\equiv0\Mod3$, then $m$ is represented by $f$.
Note that $8n+22-2d^2-2e^2\equiv2\Mod8$ and $8n+22-2d^2-2e^2\equiv0\Mod3$, where
$$
(d,e)=\begin{cases}
(3,3)\quad\text{if}~8n+22\equiv0\Mod3,\\
(1,1)\quad\text{if}~8n+22\equiv1\Mod3,\\
(1,3)\quad\text{if}~8n+22\equiv2\Mod3.
\end{cases}
$$
Furthermore, since we are assuming $n\geq2$, $8n+22-2d^2-2e^2$ is positive. 
Therefore, the equation
$$
3z^2+6t^2+9s^2=8n+22-2d^2-2e^2
$$
has an integer solution $(x,y,z,t,s)\in\mathbb{Z}^5$ such that $z\equiv t\Mod2$.
This completes the proof.

%%%%%%%%%%%%%%%%%%%%%%%%%%%%%%%%%%%%%%%%%%%%%%%%%%%%%%%%%%%%%%%%%%%%%%%%%%%%%%%%%%%%%%%%%%%%%%%%%%%%%%%%%%%% 

\vskip0.5pc
Now, we give a proof of the first statement of Theorem \ref{1}.  
For positive integers $\alpha_1, \alpha_2, \dots , \alpha_k$, assume that a sum $\Delta_{\alpha_1,\alpha_2,\dots,\alpha_k}$ of triangular numbers represents the integers
$$
2,~3,~4,~8,~10,~16,~\text{and}~19
$$
and doesn't represent $1$.
By using the same escalation method, without loss of generality, we may assume that $\Delta_{\alpha_1,\alpha_2,\alpha_3,\alpha_4}$ is contained in the above 11 quaternary sums of proper almost universal sums of triangular numbers with one exception 1, or $\Delta_{\alpha_1,\alpha_2,\alpha_3,\alpha_4,\alpha_5}$ is contained in the above 18 quinary ones.
Furthermore,  $\alpha_1, \alpha_2, \dots , \alpha_k$ are integers greater than one since $\Delta_{\alpha_1,\alpha_2,\dots,\alpha_k}$ doesn't represent 1.
Therefore, $\Delta_{\alpha_1,\alpha_2,\dots,\alpha_k}$ is an almost universal sum of triangular numbers with one exception $1$.
This completes the proof of Theorem \ref{1}. 
\end{proof}

The proofs of Theorems \ref{2}, \ref{3}, \ref{4}, and \ref{5} are quite similar to the proof of Theorem \ref{1}.
In each proof, by using an escalation method, we find all candidates of proper almost universal sums of triangular numbers with one exception $4, 5, 8$, and $2$, respectively. 
Furthermore, we show that each candidate $\Delta_{\alpha_1,\alpha_2,\dots,\alpha_k}$ $(k=4, 5)$ is an almost universal sums of triangular numbers with one exception.
To show this, we take a suitable ternary quadratic form $f(x_1,x_2,x_3)$ related with ternary section $\Delta_{\alpha_{i_1},\alpha_{i_2},\alpha_{i_3}}$ of $\Delta_{\alpha_1,\alpha_2,\dots,\alpha_k}$  like as Equation \eqref{without congruence condition}.
After that  for sufficiently large integer $n$, we find integers  $a_4,\dots,a_k\in\mathbb{Z}$ satisfying condition \eqref{conditions}.
Finally, we directly check that $\Delta_{\alpha_1,\alpha_2,\dots,\alpha_k}$ represents all remaining small integers except a single one.

Note that in most cases, the class number of $f$ is less than or equal to 2. 
The methods for computations for representations of $f$ are categorized into the following three cases:
\begin{enumerate}
\item If  $h(f)=1$, then one may easily compute the representations of $f$ by the local-global principle similarly to the proof of $\Delta_{2,3,4,8}$ in the case (iv) of Theorem \ref{1} (see also the proof of $\Delta_{2,3,3,4}$ in the case (iii)).
\vskip0.5pc
\item If $h(f)=2$ and  $|B|=|B_f(g,d,a)|=0$ for some integers $d$ and $a$, where $g$ is the genus mate of $f$, then one may compute the representations of $f$ by Theorem \ref{good} similarly to the proof of $\Delta_{2,2,3,4}$ in the case (ii) of Theorem \ref{1} (see also the proof of $\Delta_{2,3,4,5}$ in the case (iv)).
\vskip0.5pc
\item If $h(f)=2$ and $|B|\neq 0$, then then one may compute the representations of $f$ by Theorem \ref{pme} similarly to the proof of $\Delta_{2,3,4,4}$ in the case (iv) of Theorem \ref{1}.
\end{enumerate}   

In the remaining sections, since most of proofs require laborious computation, we only provide all parameters for the computations for the representations of the ternary quadratic form $f$ (see Section 4, 5, 6, and 7).
One may easily apply the given parameters to Theorem \ref{good} or Theorem \ref{pme} to compute the representations of the ternary quadratic form $f$.
We left to the readers to find suitable integers $\alpha_4,\cdots,\alpha_k$ stated above and to check the representations of remaining small integers. 
For the complete list of proper almost universal sums of triangular numbers with one exception, see Tables 1, 2, 10, 13, and 15.

%%%%%%%%%%%%%%%%%%%%%%%%%%%%%%%%%%%%%%%%%%%%%%%%%%%%%%%%%%%%%%%%%%%%%%%%%%%%%%%%%%%%%%%%%%%%%%%%%%%%%%%%%%%%%%%%%%%%%%%%%%%%%%%%%%%%%%%%%%%%%%%%%%%%%%%%%%%%%%%%%%%%%%%%%%%%%%%%%%%%%%%%%%%%%%%%%%%%%%%%%%%%%%%%%%%%%%%%%%%%%%%%%%%%%%%%%%%%%%%%%%%%%%%%%%%%%%%%%%%%%%%%%%%%%%%%%%%%%%%%%%%%%%%%%%%%%%%%%%%%%%%%%%%%%%%%%%%%%%%%%%%%%%%%%%%%%%%%%%%%%%%%%%%%%%%%%%%%%%%%%%%%%%%%%%%%%%%%%%%%%%%%%%%%%%%%%%%%%%%%%%%%%%%%%%%%%%%%%%%%%%%%%%%%%%%%%%%%%%%%%%%%%%%%%%%%%%%%%%%%%%%%%%%%%%%%%%%%%%%%%%%%%%%%%%%%%%%%%%%%%%%%%%%%%%%%%%%%%%%%%%%%%%%%%%%%%%%%%%%%%%%%%%%%%%%%%%%%%%%%%%%%%%%%%%%%%%%%%%%%%%%%%%%%%%%%%%%%%%%%%%%%%%%%%%%%%%%%%%%%%%%%%%%%%%%%%%%%%%%%%%%%%%%%%%%%%%%%%%%%%%%%%%%%%%%%%%%%%%%%%%%%%%%%%%%%%%%%%%%%%%%%%%%%%%%%%%%%%%%%%%%%%%%%%%%%%%%%%%%%%%%%%%%%%%%%%%%%%%%%%%%%%%%%%%%%%%%%%%%%%%%%%%%%%%%%%%%%%%%%%%%%%%%%%%%%%%%%%%%%%%%%%%%%%%%%%%%%%%%%%%%%%%%%%%%%%%%%%%%%%%%%%%%%%%%%%%%%%%%%%%%%%%%%%%%%%%%%%%%%%%%%%%%%%%%%%%%%%%%%%%%%%%%%%%%%%%%%%%%%%%%%%%%%%%%%%%%%%%%%%%%%%%%%%%%%%%%%%%%%%%%%%%%%%%%%%%%%%%%%%%%%%%%%%%%%%%%%%%%%%%%%%%%%%%%%%%%%%%%%%%%%%%%%%%%%%%%%%%%%%%%%%%%%%%%%%%%%%%%%%%%%%%%%%%%%%%%%%%%%%%%%%%%%%%%%%%%%%%%%%%

\section{Proof of Theorem \ref{2}}
We give a proof of Theorem \ref{2}.
From a similar escalation method as the proof of Theorem \ref{1}, we find all candidates of 127 quaternary and 11 quinary proper almost universal sums of   triangular numbers with one exception $4$ (see Table 2).
\begin{table}[hpt!]
\caption{Proper almost universal sums with one exception 4}
\vspace{-3mm}\begin{center}
\renewcommand{\arraystretch}{0.85}\renewcommand{\tabcolsep}{0.5mm}
\begin{tabular}{l|l|l}
\hline
Sums &Candidates & Conditions on $\alpha_k$ \\
\hline
$\Delta_{1,2,5,\alpha_4}$ & $5\leq \alpha_4 \leq 19$ & $\alpha_4\neq10,15$\\
$\Delta_{1,2,6,\alpha_4}$ & $6\leq \alpha_4 \leq 50$ & $\alpha_4\neq46$\\
$\Delta_{1,2,7,\alpha_4}$ & $7\leq \alpha_4 \leq 11$ & $\alpha_4\neq7$\\
$\Delta_{1,2,8,\alpha_4}$ & $8\leq \alpha_4 \leq 19$ & $\alpha_4\neq15$\\
$\Delta_{1,2,9,\alpha_4}$ & $9\leq \alpha_4 \leq 46$ & $\alpha_4\neq42$\\
$\Delta_{1,2,10,\alpha_4}$ & $10\leq \alpha_4 \leq 14$ & $\alpha_4\neq10$\\
$\Delta_{1,2,11,\alpha_4}$ & $11\leq \alpha_4 \leq 25$ & $\alpha_4\neq21$\\
\hline
$\Delta_{1,2,5,10,\alpha_5}$ & $10\leq \alpha_5 \leq 29$ & $\alpha_5\neq11^{\dagger},12^{\dagger},13^{\dagger},14^{\dagger},16^{\dagger},$\\
~&~& \hspace{8.5mm}$17^{\dagger},18^{\dagger},19^{\dagger},25$\\
$\Delta_{1,2,5,15,\alpha_5}$ & $15\leq \alpha_5 \leq 19$ & $\alpha_5\neq15,16^{\dagger},17^{\dagger},18^{\dagger},19^{\dagger}$\\
$\Delta_{1,2,6,46,\alpha_5}$ & $46\leq \alpha_5 \leq 50$ & $\alpha_5\neq46,47^{\dagger},48^{\dagger},49^{\dagger},50^{\dagger}$\\
$\Delta_{1,2,7,7,\alpha_5}$ & $7\leq \alpha_5 \leq 11$ & $\alpha_5\neq7,8^{\dagger},9^{\dagger},10^{\dagger},11^{\dagger}$\\
$\Delta_{1,2,8,15,\alpha_5}$ & $15\leq \alpha_5 \leq 19$ & $\alpha_5\neq15,16^{\dagger},17^{\dagger},18^{\dagger},19^{\dagger}$\\
$\Delta_{1,2,9,42,\alpha_5}$ & $42\leq \alpha_5 \leq 46$ & $\alpha_5\neq42,43^{\dagger},44^{\dagger},45^{\dagger},46^{\dagger}$\\
$\Delta_{1,2,10,10,\alpha_5}$ & $10\leq \alpha_5 \leq 14$ & $\alpha_5\neq10,11^{\dagger},12^{\dagger},13^{\dagger},14^{\dagger}$\\
$\Delta_{1,2,11,21,\alpha_5}$ & $21\leq \alpha_5 \leq 25$ & $\alpha_5\neq21,22^{\dagger},23^{\dagger},24^{\dagger},25^{\dagger}$\\
\hline
$\Delta_{\alpha_1,\alpha_2,\dots,\alpha_{k-1},\alpha_k}(k\geq6)$ & $\alpha_{k-1}\leq\alpha_k\leq\alpha_{k-1}+4$ & $\alpha_k\neq\alpha_{k-1}+\ell$  $(\ell=0,1^{\dagger},2^{\dagger},3^{\dagger},4^{\dagger})$\\                                                                                                            
\hline
\end{tabular}
\end{center}
\end{table}

\vskip-0.5pc
Since the proof of almost universality of each candidate is quite similar to the proof of Theorem \ref{1}, we only provide all parameters for the computations for representations of the ternary quadratic form $f$ (see Tables 3, 4, 5, 6, 7, 8, and  9).
%%%%%%%%%%%%%%%%%%%%%%%%%%%%%%%%%%%%%%%%%%%%%%%%%%%%%%%%%%%%%%%%%%%%%%%%%%%%%%%%%%%%%%%%%%%%%%%%%%%%%%%%%%%%%%%%%%%%%%
\begin{table}[h]
\caption{Data for the proof of the candidates when $(\alpha_1,\alpha_2,\alpha_3)=(1,2,5)$}
\vspace{-3mm}\begin{center}
\footnotesize
\renewcommand{\arraystretch}{1}\renewcommand{\tabcolsep}{1mm}
% [inline block 0: 7 envs, 33098 chars -> data_tex | \begin{tabular}{|c|c|l|} \hline...]

\end{center}
\end{table}

%%%%%%%%%%%%%%%%%%%%%%%%%%%%%%%%%%%%%%%%%%%%%%%%%%%%%%%%%%%%%%%%%%%%%%%%%%%%%%%%%%%%%%%%%%%%%%%%%%%%%%%%%%%%%%%%%%%%%%%%%%%%%%%%%%%%%%%%%%%%%%%%%%%%%%%%%%%%%%%%%%%%%%%%%%%%%%%%%%%%%%%%%%%%%%%%%%%%%%%%%%%%%%%%%%%%%%%%%%%%%%%%%%%%%%%%%%%%%%%%%%%%%%%%%%%%%%%%%%%%%%%%%%%%%%%%%%%%%%%%%%%%%%%%%%%%%%%%%%%%%%%%%%%%%%%%%%%%%%%%%%%%%%%%%%%%%%%%%%%%%%%%%%%%%%%%%%%%%%%%%%%%%%%%%%%%%%%%%%%%%%%%%%%%%%%%%%%%%%%%%%%%%%%%%%%%%%%%%%%%%%%%%%%%%%%%%%%%%%%%%%%%%%%%%%%%%%%%%%%%%%%%%%%%%%%%%%%%%%%%%%%%%%%%%%%%%%%%%%%%%%%%%%%%%%%%%%%%%%%%%%%%%%%%%%%%%%%%%%%%%%%%%%%%%%%%%%%%%%%%%%%%%%%%%%%%%%%%%%%%%%%%%%%%%%%%%%%%%%%%%%%%%%%%%%%%%%%%%%%%%%%%%%%%%%%%%%%%%%%%%%%%%%%%%%%%%%%%%%%%%%%%%%%%%%%%%%%%%%%%%%%%%%%%%%%%%%%%%%%%%%%%%%%%%%%%%%%%%%%%%%%%%%%%%%%%%%%%%%%%%%%%%%%%%%%%%%%%%%%%%%%%%%%%%%%%%%%%%%%%%%%%%%%%%%%%%%%%%%%%%%%%%%%%%%%%%%%%%%%%%%%%%%%%%%%%%%%%%%%%%%%%%%%%%%%%%%%%%%%%%%%%%%%%%%%%%%%%%%%%%%%%%%%%%%%%%%%%%%%%%%%%%%%%%%%%%%%%%%%%%%%%%%%%%%%%%%%%%%%%%%%%%%%%%%%%%%%%%%%%%%%%%%%%%%%%%%%%%%%%%%%%%%%%%%%%%%%%%%%%%%%%%%%%%%%%%%%%%%%%%%%%%%%%%%%%%%%%%%%%%%%%%%%%%%%%%%%%%%%%%%%%%%%%%%%%%%%%%%%%%%%%%%%%%%%%%%%%%%%%%%%%%

\section{Proof of Theorem \ref{3}}

In this section, we give a proof of Theorem \ref{3}.
From a similar escalation method as the proof of Theorem \ref{1}, we find all candidates of 56 quaternary and 115 quinary proper almost universal sums of triangular numbers with one exception $5$ (see Table 10).
The proof is quite similar to Theorem \ref{1} except for the cases when $(\alpha_1,\alpha_2,\alpha_3)=(1,1,8)$.

\begin{table}[htp!]
\caption{Proper almost universal sums with one exception 5}
\vspace{-3mm}\begin{center}
\renewcommand{\arraystretch}{1.0}\renewcommand{\tabcolsep}{1mm}
\begin{tabular}{l|l|l}
\hline
Sums &Candidates & Conditions on $\alpha_k$ \\
\hline
$\Delta_{1,1,6,\alpha_4}$ & $6\leq \alpha_4 \leq 14$ & $\alpha_4\neq6,9$\\
$\Delta_{1,1,7,\alpha_4}$ & $7\leq \alpha_4 \leq 26$ & $\alpha_4\neq7,14,21$\\
$\Delta_{1,1,8,\alpha_4}$ & $8\leq \alpha_4 \leq 41$ & $\alpha_4\neq30,36$\\
\hline
$\Delta_{1,1,6,6,\alpha_5}$ & $6\leq \alpha_5 \leq 59$ & $\alpha_5\neq7^{\dagger},8^{\dagger},10^{\dagger},11^{\dagger},12^{\dagger},13^{\dagger},$\\
&& \hspace{8.5mm}$14^{\dagger},54$\\
$\Delta_{1,1,6,9,\alpha_5}$ & $9\leq \alpha_5 \leq 14$ & $\alpha_5\neq9,10^{\dagger},11^{\dagger},12^{\dagger},13^{\dagger},14^{\dagger}$\\
$\Delta_{1,1,7,7,\alpha_5}$ & $7\leq \alpha_5 \leq 47$ & $\alpha_5\neq8^{\dagger},9^{\dagger},10^{\dagger},11^{\dagger},12^{\dagger},13^{\dagger},$\\
&& \hspace{8.5mm}$15^{\dagger},16^{\dagger},17^{\dagger},18^{\dagger},19^{\dagger},20^{\dagger},$\\
&& \hspace{8.5mm}$22^{\dagger},23^{\dagger},24^{\dagger},25^{\dagger},26^{\dagger},42$\\
$\Delta_{1,1,7,14,\alpha_5}$ & $14\leq \alpha_5 \leq 40$ & $\alpha_5\neq15^{\dagger},16^{\dagger},17^{\dagger},18^{\dagger},19^{\dagger},20^{\dagger},$\\
&& \hspace{8.5mm}$22^{\dagger},23^{\dagger},24^{\dagger},25^{\dagger},26^{\dagger},35$\\
$\Delta_{1,1,7,21,\alpha_5}$ & $21\leq \alpha_5 \leq 26$ & $\alpha_5\neq21,22^{\dagger},23^{\dagger},24^{\dagger},25^{\dagger},26^{\dagger}$\\
$\Delta_{1,1,8,30,\alpha_5}$ & $30\leq \alpha_5 \leq 71$ & $\alpha_5\neq31^{\dagger},32^{\dagger},33^{\dagger},34^{\dagger},35^{\dagger},37^{\dagger},$\\
&&\hspace{8.5mm}$38^{\dagger},39^{\dagger},40^{\dagger},41^{\dagger},66$\\
$\Delta_{1,1,8,36,\alpha_5}$ & $36\leq \alpha_5 \leq 41$ & $\alpha_5\neq36,37^{\dagger},38^{\dagger},39^{\dagger},40^{\dagger},41^{\dagger}$\\
\hline
$\Delta_{\alpha_1,\alpha_2,\dots,\alpha_{k-1},\alpha_k}(k\geq6)$ & $\alpha_{k-1}\leq\alpha_k\leq\alpha_{k-1}+5$ & $\alpha_k\neq\alpha_{k-1}+\ell$\\
&&\quad\quad$(\ell=0,1^{\dagger},2^{\dagger},3^{\dagger},4^{\dagger},5^{\dagger})$\\
\hline
\end{tabular}
\end{center}
\end{table}

%%%%%%%%%%%%%%%%%%%%%%%%%%%%%%%%%%%%%%%%%%%%%%%%%%%%%%%%%%%%%%%%%%%%%%%%%%%%%%%%%%%%%%%%%%%%%%%%%%%%%%%%%%%%%%%%%%%%%%%%%%%%%%%%%%%%%%%%%%%%%%%%%%%%%%%%%%%%%%%%%%%%%%%%%%
Assume $(\alpha_1,\alpha_2,\alpha_3)=(1,1,8)$.
We show that $\Delta_{1,1,8,\alpha_4}\quad(8\leq\alpha_4\leq41, ~\alpha_4\neq30,36)$ are almost universal sums of triangular numbers with one exception 5. 
Since the proofs are quite similar to each other, we only provide the proof of $\Delta_{1,1,8,8}$.
By Equation \eqref{congruence condition}, it suffices to show that the equation
\begin{equation}\label{1188}
x^2+y^2+8z^2+8t^2=8n+18
\end{equation}
has an integer solution $(x,y,z,t)\in\mathbb{Z}^4$ such that $xyzt\equiv1\Mod2$.
If $0\leq n\leq 19$, then one may directly check that Equation \eqref{1188} has a desired integer solution.
Therefore, we may assume that $n\geq20$.
Note that the genus of $f(x,y,z)=x^2+(2y+z)^2+8z^2$ consists of 
$$
M_f=\begin{pmatrix}1&0&0\\0&4&2\\0&2&9\end{pmatrix},\quad M_2=\langle1,1,32\rangle,\quad\text{and}\quad M_3=\begin{pmatrix}2&0&1\\0&2&1\\1&1&9\end{pmatrix}.
$$ 
For an integer $m$, if $m\equiv2\Mod8$, then $m$ is represented by $M_f$, $M_2$, or $M_3$ by 102:5 of \cite{om}, for it is represented by $M_f$ over $\mathbb{Z}_p$ for any prime $p$.
Furthermore, note that the spinor genus of $f$ consists of unique class $f$, itself.
One may easily show that a positive integer $a$ is a spinor exception of the genus of $f$ only if $a=2m^2$ for some $m\in\mathbb{Z}$ (for details, see \cite{sp2}).
Assume that 
$$
8n+18-8=2m_1^2 \quad\text{and}\quad 8n+18-8\cdot3^2=2m_2^2
$$ 
for some $m_1,m_2\in\mathbb{Z}$.
Then $2m_1^2-2m_2^2=64$.
So $(m_1,m_2)\in\{(9,7),(6,2)\}$.
However this is impossible since we are assuming $n\geq20$. 
Therefore, one of the integers $8n+18-8$ or $8n+18-8\cdot3^2$ is not spinor exception of the genus of $f$, in fact, it is represented by $f$. 
This implies that the equation
$$
x^2+y^2+8z^2=8n+18-8d^2
$$
has an integer solution $(x,y,z)\in\mathbb{Z}^3$ for some $d\in\{1,3\}$ such that $y\equiv z\Mod2$.
This completes the proof.

Now, We show that $\Delta_{1,1,8,30,\alpha_5}~(30\leq\alpha_5\leq71, ~\alpha_5\neq31,32,33,34,35,37,38,39$, $40,41,66)$ 
are almost universal sums of triangular numbers with one exception 5.
Similarly as above, one may easily show that $\Delta_{1,1,8,30}$ represents all nonnegative integers except $5$ and $71$.
Therefore, every $\Delta_{1,1,8,30,\alpha_5}$ is an almost universal sum of triangular numbers with one exception $5$. 
This completes the proof.

In the remaining cases, since the proof of the almost universality of each candidate is quite similar to the proof of Theorem \ref{1},
we only provide all parameters for the computations for representations of the ternary quadratic form $f$ (see Tables 11 and 12).

%%%%%%%%%%%%%%%%%%%%%%%%%%%%%%%%%%%%%%%%%%%%%%%%%%%%%%%%%%%%%%%%%%%%%%%%%%%%%%%%%%%%%%%%%%%%%%%%%%%%%%%%%%%%%%%%%%%%%%%%%%%%%%%%%%%%%%%%%%%%%%%%%%%%%%%%%%%%%%%%%%%%%%%%%
\begin{table}[h]
\vspace{-1mm}
\caption{Data for the proof of the candidates when $\small{(\alpha_1,\alpha_2,\alpha_3)=(1,1,6)}$}
\vspace{-3mm}\begin{center}
\footnotesize
\renewcommand{\arraystretch}{1.1}\renewcommand{\tabcolsep}{0.5mm}
\begin{tabular}{|c|c|l|}
\hline

$\alpha_4$ & $\Delta_{\alpha_{i_1},\alpha_{i_2},\alpha_{i_3}}$ & \multirow{2}{*}{Sufficient conditions for $m\rightarrow f$}\\ \cline{1-2}
$f$ & $h(f)$ &                                                                                                                     \\
\hline\hline
every case & $\Delta_{1,1,6}$&\multirow{2}{*}{$m\equiv0\Mod8, m\neq 3^{2u+1}(3v+1)$}\\ \cline{1-2}
$x^2+y^2+6z^2$  & $1$ &  \\          
\hline\hline
$\alpha_4=12$ & $\Delta_{1,1,12}$ &\multirow{2}{*}{$m\equiv6\Mod8$, $m\neq 3^{2u+1}(3v+2)$}\\ \cline{1-2}
$x^2+y^2+12t^2$  & $1$ &      \\      
\hline
\end{tabular}
\end{center}
\end{table}

%%%%%%%%%%%%%%%%%%%%%%%%%%%%%%%%%%%%%%%%%%%%%%%%%%%%%%%%%%%%%%%%%%%%%%%%%%%%%%%%%%%%%%%%%%%%%%%%%%%%%%%%%%%%%%%%%%%%%%%%%%%%%%%%%%%%%%%%%%%%%%%%%%%%%%%%%%%%%%%%%%%%%%%%%%

\begin{table}[h]
\vspace{-3mm}
\caption{Data for the proof of the candidates when $\small{(\alpha_1,\alpha_2,\alpha_3)=(1,1,7)}$}
\vspace{-3mm}\begin{center}
\footnotesize
\renewcommand{\arraystretch}{1.1}\renewcommand{\tabcolsep}{0.5mm}
\begin{tabular}{|c|c|c|c|c|c|c|l|}
\hline
 \multicolumn{2}{|c|}{$\Delta_{\alpha_{i_1},\alpha_{i_2},\alpha_{i_3}}$}             &  \multirow{3}{*}{$d$} & \multirow{3}{*}{$a$} & \multirow{3}{*}{$|B|$}       & \multicolumn{2}{c|}{\multirow{2}{*}{$T$}}  & \multirow{3}{*}{$\setlength\arraycolsep{0pt}\begin{array}{l}\text{Sufficient conditions} \\ \text{for } m\rightarrow f\end{array}$}\\ \cline{1-2}
$f$                            &  $h(f)$                                             &                       &                       &                             &  \multicolumn{2}{c|}{}                     &                                                                                                          \\ \cline{1-2} \cline{6-7}
$M_f$                          &  $M_2$                                              &                      &                        &                            &  $z$            &  $Q(z)$                  &                                                                                                          \\
\hline  \hline

 \multicolumn{2}{|c|}{$\Delta_{1,1,7}$}                                     & \multirow{5}{*}{$8$}   & \multirow{5}{*}{$1$}    & \multirow{5}{*}{8}      & \multicolumn{2}{c|}{\multirow{4}{*}{$\begin{pmatrix}8&0&0\\0&6&-14\\0&2&6\end{pmatrix}$}} & \multirow{5}{*}{$\setlength\arraycolsep{0pt}\begin{array}{l}m>1,\\ m\notequiv0\Mod{49},\\ m\equiv1\Mod8,\\ m\neq7^{2u+1}(7v+r)\\ \hspace{6mm}\text{for any } r\in\{3,5,6\}\end{array}$}\\  \cline{1-2}
 $(2x+y)^2+2y^2+7z^2$                     &    2                             &                       &                         &                          & \multicolumn{2}{c|}{}                                                                                                   &\\ \cline{1-2}
 \multirow{3}{*}{$\langle2,2,7\rangle$} & \multirow{3}{*}{$\langle1,2,14\rangle$}&                    &                         &                          & \multicolumn{2}{c|}{}                                                                                                   &\\ 
                                        &                                   &                        &                         &                          & \multicolumn{2}{c|}{}                                                                                                   &\\ \cline{6-7} 
                                        &                                   &                        &                         &                          & $\pm(1,0,0)$              & $1$                                                                                         &\\
\hline
\end{tabular}
\end{center}
\end{table}

%%%%%%%%%%%%%%%%%%%%%%%%%%%%%%%%%%%%%%%%%%%%%%%%%%%%%%%%%%%%%%%%%%%%%%%%%%%%%%%%%%%%%%%%%%%%%%%%%%%%%%%%%%%%%%%%%%%%%%%%%%%%%%%%%%%%%%%%%%%%%%%%%%%%%%%%%%%%%%%%%%%%%%%%%%%%%%%%%%%%%%%%%%%%%%%%%%%%%%%%%%%%%%%%%%%%%%%%%%%%%%%%%%%%%%%%%%%%%%%%%%%%%%%%%%%%%%%%%%%%%%%%%%%%%%%%%%%%%%%%%%%%%%%%%%%%%%%%%%%%%%%%%%%%%%%%%%%%%%%%%%%%%%%%%%%%%%%%%%%%%%%%%%%%%%%%%%%%%%%%%%%%%%%%%%%%%%%%%%%%%%%%%%%%%%%%%%%%%%%%%%%%%%%%%%%%%%%%%%%%%%%%%%%%%%%%%%%%%%%%%%%%%%%%%%%%%%%%%%%%%%%%%%%%%%%%%%%%%%%%%%%%%%%%%%%%%%%%%%%%%%%%%%%%%%%%%%%%%%%%%%%%%%%%%%%%%%%%%%%%%%%%%%%%%%%%%%%%%%%%%%%%%%%%%%%%%%%%%%%%%%%%%%%%%%%%%%%%%%%%%%%%%%%%%%%%%%%%%%%%%%%%%%%%%%%%%%%%%%%%%%%%%%%%%%%%%%%%%%%%%%%%%%%%%%%%%%%%%%%%%%%%%%%%%%%%%%%%%%%%%%%%%%%%%%%%%%%%%%%%%%%%%%%%%%%%%%%%%%%%%%%%%%%%%%%%%%%%%%%%%%%%%%%%%%%%%%%%%%%%%%%%%%%%%%%%%%%%%%%%%%%%%%%%%%%%%%%%%%%%%%%%%%%%%%%%%%%%%%%%%%%%%%%%%%%%%%%%%%%%%%%%%%%%%%%%%%%%%%%%%%%%%%%%%%%%%%%%%%%%%%%%%%%%%%%%%%%%%%%%%%%%%%%%%%%%%%%%%%%%%%%%%%%%%%%%%%%%%%%%%%%%%%%%%%%%%%%%%%%%%%%%%%%%%%%%%%%%%%%%%%%%%%%%%%%%%%%%%%%%%%%%%%%%%%%%%%%%%%%%%%%%%%%%%%%%%%%%%%%%%%%%%%%%%%%%%%%%%%%%%%%%%%%%%%%%%%%%%%%%%%%%%%%%%%%%%%%%%%%%%%
\section{Proof of Theorem \ref{4}}
In this section, we give a proof of Theorem \ref{4}.
From a similar escalation method as the proof of Theorem \ref{1}, we find all candidates of  7 quaternary and 73 quinary proper almost universal sums of triangular numbers  with one exception $8$ (see Table 13).
\begin{table}[h]
\caption{Proper almost universal sums with one exception 8}
\vspace{-3mm}
\begin{center}
\renewcommand{\arraystretch}{1.0}\renewcommand{\tabcolsep}{1mm}
\begin{tabular}{l|l|l}
\hline
Sums &Candidates & Conditions on $\alpha_k$ \\
\hline
$\Delta_{1,1,3,\alpha_4}$ & $3\leq \alpha_4 \leq 17$ & $\alpha_4\neq3^{\star},4^{\star},5^{\star},6^{\star},7^{\star},8^{\star},$\\
&& \hspace{8.5mm}$9,12$\\
\hline
$\Delta_{1,1,3,9,\alpha_5}$ & $9\leq \alpha_5 \leq 17$ & $\alpha_5\neq9,10^{\dagger},11^{\dagger},13^{\dagger},14^{\dagger},15^{\dagger},$\\
&& \hspace{8.5mm}$16^{\dagger},17^{\dagger}$\\
$\Delta_{1,1,3,12,\alpha_5}$ & $12\leq \alpha_5 \leq 89$ & $\alpha_5\neq13^{\dagger},14^{\dagger},15^{\dagger},16^{\dagger},17^{\dagger},81$\\
\hline
$\Delta_{\alpha_1,\alpha_2,\dots,\alpha_{k-1},\alpha_k}(k\geq6)$ & $\alpha_{k-1}\leq\alpha_k\leq\alpha_{k-1}+8$ & $\alpha_k\neq\alpha_{k-1}+\ell$\\
&&$(\ell=0,1^{\dagger},2^{\dagger},3^{\dagger},4^{\dagger},5^{\dagger},6^{\dagger},7^{\dagger},8^{\dagger})$\\
\hline
\end{tabular}
\end{center}
\end{table}

Since the proof of almost universality of each candidate is quite similar to the proof of Theorem \ref{1}, we only provide all parameters for the computations for representations of the ternary quadratic form $f$ (see Table 14).

%%%%%%%%%%%%%%%%%%%%%%%%%%%%%%%%%%%%%%%%%%%%%%%%%%%%%%%%%%%%%%%%%%%%%%%%%%%%%%%%%%%%%%%%%%%%%%%%%%%%%%%%%%%%%%%%%%%%%%%%%%%%%%%%%%%%%%%%%%%%%%%%%%%%%%%%%%%%%%%%%%%%%%%%%

\begin{table}[h]
\caption{Data for the proof of the candidates when $\small{(\alpha_1,\alpha_2,\alpha_3)=(1,1,3)}$}
\vspace{-3mm}
\begin{center}
\footnotesize
\renewcommand{\arraystretch}{1.1}\renewcommand{\tabcolsep}{0.5mm}
\begin{tabular}{|c|c|c|c|c|c|c|l|}
\hline
$\alpha_4$                     & $\Delta_{\alpha_{i_1},\alpha_{i_2},\alpha_{i_3}}$   &  \multirow{3}{*}{$d$} & \multirow{3}{*}{$a$} & \multirow{3}{*}{$|B|$}       & \multicolumn{2}{c|}{\multirow{2}{*}{$T$}}  & \multirow{3}{*}{$\setlength\arraycolsep{0pt}\begin{array}{l}\text{Sufficient conditions} \\ \text{for } m\rightarrow f\end{array}$}\\ \cline{1-2}
$f$                            &  $h(f)$                                             &                       &                       &                             &  \multicolumn{2}{c|}{}                     &                                                                                                          \\ \cline{1-2} \cline{6-7}
$M_f$                          &  $M_2$                                               &                       &                        &                           &    $z$   & $Q(z)$                          &                         \\
\hline\hline
 every case                    & $\Delta_{1,1,3}$                                   & \multirow{3}{*}{} & \multirow{3}{*}{}& \multirow{3}{*}{} & \multicolumn{2}{|c|}{}  & \multirow{3}{*}{$\setlength\arraycolsep{0pt}\begin{array}{l}m\equiv5\Mod8,\\ m\neq 3^{2u+1}(3v+2)\end{array}$}\\ \cline{1-2}
$(2x+y)^2+y^2+3z^2$             &      $1$                                          &                   &                   &                  &  \multicolumn{2}{c|}{}    &   \\ \cline{1-2}
$\langle2,2,3\rangle$           &                                                   &                   &                    &                  &  \multicolumn{2}{c|}{}      &  \\  
\hline\hline

$\alpha_4=6$    & $\Delta_{1,1,6}$ & \multirow{2}{*}{}       & \multirow{2}{*}{}      & \multirow{2}{*}{}       & \multicolumn{2}{c|}{} & \multirow{3}{*}{$\setlength\arraycolsep{0pt}\begin{array}{l}m\equiv0\Mod8,\\ m\neq3^{2u+1}(3v+1)\end{array}$} \\ \cline{1-2}
$x^2+y^2+6t^2$  &   $1$          &                       &                         &                        &   \multicolumn{2}{c|}{}  &                       \\ \cline{1-2}
$\langle1,1,6\rangle$&            &                         &                         &                     &    \multicolumn{2}{c|}{}   &                      \\
\hline\hline

$\alpha_4=15$                                            & $\Delta_{1,1,15}$      & \multirow{4}{*}{$8$}   & \multirow{4}{*}{$1$}      & \multirow{4}{*}{8}     & \multicolumn{2}{c|}{\multirow{3}{*}{$\begin{pmatrix}8&0&0\\0&7&-5\\0&3&7\end{pmatrix}$}} & \multirow{4}{*}{$\setlength\arraycolsep{0pt}\begin{array}{l} m>9,\\m\equiv1\Mod8,\\ m\neq3^{2u+1}(3v+1)\end{array}$}\\   \cline{1-2}     
$(2x+y)^2+y^2+15t^2$                   &  2                                       &                         &                            &                        & \multicolumn{2}{c|}{}                                                                                                   &\\ \cline{1-2} 
\multirow{2}{*}{$\langle2,2,15\rangle$} & \multirow{2}{*}{$\langle1,6,10\rangle$}  &                        &                         &                             & \multicolumn{2}{c|}{}                                                                                  &\\ \cline{6-7}
                                        &                                         &                         &                                                         &                         & $\pm(1,0,0)$              & $1$                                                                  &\\
\hline
\end{tabular}
\end{center}
\end{table}

%%%%%%%%%%%%%%%%%%%%%%%%%%%%%%%%%%%%%%%%%%%%%%%%%%%%%%%%%%%%%%%%%%%%%%%%%%%%%%%%%%%%%%%%%%%%%%%%%%%%%%%%%%%%%%%%%%%%%%%%%%%%%%%%%%%%%%%%%%%%%%%%%%%%%%%%%%%%%%%%%%%%%%%%%%%%%%%%%%%%%%%%%%%%%%%%%%%%%%%%%%%%%%%%%%%%%%%%%%%%%%%%%%%%%%%%%%%%%%%%%%%%%%%%%%%%%%%%%%%%%%%%%%%%%%%%%%%%%%%%%%%%%%%%%%%%%%%%%%%%%%%%%%%%%%%%%%%%%%%%%%%%%%%%%%%%%%%%%%%%%%%%%%%%%%%%%%%%%%%%%%%%%%%%%%%%%%%%%%%%%%%%%%%%%%%%%%%%%%%%%%%%%%%%%%%%%%%%%%%%%%%%%%%%%%%%%%%%%%%%%%%%%%%%%%%%%%%%%%%%%%%%%%%%%%%%%%%%%%%%%%%%%%%%%%%%%%%%%%%%%%%%%%%%%%%%%%%%%%%%%%%%%%%%%%%%%%%%%%%%%%%%%%%%%%%%%%%%%%%%%%%%%%%%%%%%%%%%%%%%%%%%%%%%%%%%%%%%%%%%%%%%%%%%%%%%%%%%%%%%%%%%%%%%%%%%%%%%%%%%%%%%%%%%%%%%%%%%%%%%%%%%%%%%%%%%%%%%%%%%%%%%%%%%%%%%%%%%%%%%%%%%%%%%%%%%%%%%%%%%%%%%%%%%%%%%%%%%%%%%%%%%%%%%%%%%%%%%%%%%%%%%%%%%%%%%%%%%%%%%%%%%%%%%%%%%%%%%%%%%%%%%%%%%%%%%%%%%%%%%%%%%%%%%%%%%%%%%%%%%%%%%%%%%%%%%%%%%%%%%%%%%%%%%%%%%%%%%%%%%%%%%%%%%%%%%%%%%%%%%%%%%%%%%%%%%%%%%%%%%%%%%%%%%%%%%%%%%%%%%%%%%%%%%%%%%%%%%%%%%%%%%%%%%%%%%%%%%%%%%%%%%%%%%%%%%%%%%%%%%%%%%%%%%%%%%%%%%%%%%%%%%%%%%%%%%%%%%%%%%%%%%%%%%%%%%%%%%%%%%%%%%%%%%%%%%%%%%%%%%%%%%%%%%%%%%%%%%%%%%%%%%%%%%%%%%%%%%%%%%%%
\section{Proof of Theorem \ref{5}}

In this section, we give a proof of Theorems \ref{5}.
For positive integers $\alpha_1,\alpha_2,\dots,\alpha_k$, assume that $\Delta_{\alpha_1,\alpha_2,\dots,\alpha_k}$ of triangular numbers is almost universal with the unique exception $2$.
We may assume that $\alpha_1\leq\alpha_2\leq\cdots\leq\alpha_k$.
We know that $\alpha_1$ should be $1$ since it represents 1. 
Then $3\leq\alpha_2\leq4$ since $\mathfrak{T}_2(\Delta_1)=4$.
Note that 
$$
\mathfrak{T}_2(\Delta_{\alpha_1,\alpha_2})=
\begin{cases}
5\quad \text{if}~ (\alpha_1,\alpha_2)=(1,3),\\
8\quad \text{if}~ (\alpha_1,\alpha_2)=(1,4).
\end{cases}
$$
Therefore $(\alpha_1,\alpha_2,\alpha_3)$ should be one of the followings:
$$
(1,3,3),~(1,3,4),~(1,3,5),~(1,4,4),~(1,4,5),~(1,4,6),~(1,4,7),~\text{and}~(1,4,8).
$$
If $(\alpha_1,\alpha_2,\alpha_3)\neq(1,4,5)$, then the second truants are

$$
\mathfrak{T}_2(\Delta_{\alpha_1,\alpha_2,\alpha_3})=
\begin{cases}
{\setlength\arraycolsep{1pt}
\begin{array}{llll}
&5\quad &\text{if}&~(\alpha_1,\alpha_2,\alpha_3)=(1,3,3),\\ 
&11\quad &\text{if}&~(\alpha_1,\alpha_2,\alpha_3)=(1,3,4),\\
&7\quad &\text{if}&~(\alpha_1,\alpha_2,\alpha_3)=(1,3,5),\\
&20\quad &\text{if}&~(\alpha_1,\alpha_2,\alpha_3)=(1,4,4),\\
&8\quad &\text{if}&~(\alpha_1,\alpha_2,\alpha_3)=(1,4,6),\\
&9\quad &\text{if}&~(\alpha_1,\alpha_2,\alpha_3)=(1,4,7),\\
&16\quad &\text{if}&~(\alpha_1,\alpha_2,\alpha_3)=(1,4,8).
\end{array}}
\end{cases}
$$

On the other hand, we checked that $\Delta_{1,4,5}$ represents all nonnegative integers up to $10^7$ except $2$.
We conjectured that the sum $\Delta_{1,4,5}$ of triangular numbers is almost universal with the unique exception 2 (see Conjecture \ref{Conj}).
Note that there is no ternary almost universal sum of triangular numbers with one exception $1,3,4$, and $8$ (see Theorems \ref{1}, \ref{2}, \ref{3}, and \ref{4}).
Therefore, $\Delta_{1,4,5}$ is the unique candidate of ternary almost universal sums of triangular numbers with one exception. 
In this section we assume that Conjecture \ref{Conj} is true.
 
Continuing on with the escalation method, we find all candidates of  34 quaternary and 37 quinary proper almost universal sums of triangular numbers with one exception $2$ (see Table 15).

\begin{table}[hpt!]
\caption{Proper almost universal sums with one exception 2}
\vspace{-3mm}\begin{center}
\renewcommand{\arraystretch}{0.87}\renewcommand{\tabcolsep}{1mm}
\begin{tabular}{l|l|l}
\hline
Sums &Candidates & Conditions on $\alpha_k$ \\
\hline
$\Delta_{1,3,\alpha_3}$ & $3\leq \alpha_3 \leq 5$ & $\alpha_4\neq3,4,5$\\
$\Delta_{1,4,\alpha_3}$ & $4\leq \alpha_3 \leq 8$ & $\alpha_4\neq4,6,7,8$\\
\hline
$\Delta_{1,3,3,\alpha_4}$ & $3\leq \alpha_4 \leq 5$ & $\alpha_4\neq3,4$\\
$\Delta_{1,3,4,\alpha_4}$ & $4\leq \alpha_4 \leq 11$ & $\alpha_4\neq5^{\dagger},9$\\
$\Delta_{1,3,5,\alpha_4}$ & $5\leq \alpha_4 \leq 7$ & $\alpha_4\neq5$\\
$\Delta_{1,4,4,\alpha_4}$ & $4\leq \alpha_4 \leq 20$ & $\alpha_4\neq5^{\dagger},15,18$\\
$\Delta_{1,4,6,\alpha_4}$ & $6\leq \alpha_4 \leq 8$ & $\alpha_4\neq6$\\
$\Delta_{1,4,7,\alpha_4}$ & $7\leq \alpha_4 \leq 9$ & $\alpha_4\neq7$\\
$\Delta_{1,4,8,\alpha_4}$ & $8\leq \alpha_4 \leq 16$ & $\alpha_4\neq14,15$\\
\hline
$\Delta_{1,3,3,3,\alpha_5}$ & $3\leq \alpha_5 \leq 5$ & $\alpha_5\neq3,5^{\dagger}$\\
$\Delta_{1,3,3,4,\alpha_5}$ & $4\leq \alpha_5 \leq 29$ & $\alpha_5\neq4^{\dagger},5^{\dagger},6^{\dagger},7^{\dagger},8^{\dagger},10^{\dagger},$\\
&&\hspace{8.5mm}$11^{\dagger},27$\\
$\Delta_{1,3,4,9,\alpha_5}$ & $9\leq \alpha_5 \leq 11$ & $\alpha_5\neq9,10^{\dagger},11^{\dagger}$\\
$\Delta_{1,3,5,5,\alpha_5}$ & $5\leq \alpha_5 \leq 7$ & $\alpha_5\neq5,6^{\dagger},7^{\dagger}$\\
$\Delta_{1,4,4,15,\alpha_5}$ & $15\leq \alpha_5 \leq 35$ & $\alpha_5\neq16^{\dagger},17^{\dagger},19^{\dagger},20^{\dagger},33$\\
$\Delta_{1,4,4,18,\alpha_5}$ & $18\leq \alpha_5 \leq 20$ & $\alpha_5\neq18,19^{\dagger},20^{\dagger}$\\
$\Delta_{1,4,6,6,\alpha_5}$ & $6\leq \alpha_5 \leq 8$ & $\alpha_5\neq6,7^{\dagger},8^{\dagger}$\\
$\Delta_{1,4,7,7,\alpha_5}$ & $7\leq \alpha_5 \leq 9$ & $\alpha_5\neq7,8^{\dagger},9^{\dagger}$\\
$\Delta_{1,4,8,14,\alpha_5}$ & $14\leq \alpha_5 \leq 16$ & $\alpha_5\neq14,16^{\dagger}$\\
$\Delta_{1,4,8,15,\alpha_5}$ & $15\leq \alpha_5 \leq 17$ & $\alpha_5\neq15,16^{\dagger}$\\
\hline
$\Delta_{\alpha_1,\alpha_2,\dots,\alpha_{k-1},\alpha_k}(k\geq6)$ & $\alpha_{k-1}\leq\alpha_k\leq\alpha_{k-1}+2$ & $\alpha_k\neq\alpha_{k-1}+\ell~(\ell=0,1^{\dagger},2^{\dagger})$\\
\hline
\end{tabular}
\end{center}
\end{table}

Since the proof of almost universality of each candidate is quite similar to the proof of Theorem \ref{1}, we only provide all parameters for the computations for representations of the ternary form $f$ (see Tables 16, 17, 18, 19, 20, 21 and 22).

%%%%%%%%%%%%%%%%%%%%%%%%%%%%%%%%%%%%%%%%%%%%%%%%%%%%%%%%%%%%%%%%%%%%%%%%%%%%%%%%%%%%%%%%%%%%%%%%%%%%%%%%%%%%%%%%%%%%%%%%%%%%%%%%%%%%%%%%%%%%%%%%%%%%%%%%%%%%%%%%%%%%% 
\begin{table}[hpt!]
\vspace{-0.0mm}
\caption{Data for the proof of the candidates when $\small{(\alpha_1,\alpha_2,\alpha_3)=(1,3,3)}$}
\vspace{-4mm}\begin{center}
\footnotesize
\renewcommand{\arraystretch}{0.95}\renewcommand{\tabcolsep}{0.5mm}
% [inline block 1: 7 envs, 26973 chars -> data_tex | \begin{tabular}{|c|c|l|} \hline...]

\end{center}
\end{table}

%%%%%%%%%%%%%%%%%%%%%%%%%%%%%%%%%%%%%%%%%%%%%%%%%%%%%%%%%%%%%%%%%%%%%%%%%%%%%%%%%%%%%%%%%%%%%%%%%%%%%%%%%%%%

\end{document}